\documentclass{article}

\makeatletter
\def\cl@chapter{}
\makeatother
\usepackage[colorlinks=true, allcolors=blue]{hyperref}
\usepackage{amsmath,amsthm}
\usepackage{cancel}
\usepackage[nameinlink]{cleveref}

\usepackage[utf8]{inputenc}
\usepackage{amssymb,color,enumerate,graphicx,mathrsfs,algorithm,subcaption,algorithmicx,algpseudocode}
\usepackage{xspace}
\usepackage[usenames,dvipsnames]{xcolor}
\usepackage[noblocks,auth-sc]{authblk}
\usepackage[normalem]{ulem}

\usepackage{pgfplots}
\usepackage{filecontents}

\usepackage[authoryear,sort&compress,comma]{natbib}

\usepackage{rotating}

\renewcommand{\bar}{\overline}
\renewcommand{\tilde}{\widetilde}
\renewcommand{\hat}{\widehat}

\newcommand{\indexSeta}{\mathscr{A}}
\colorlet{rosso}{red!70!green}

\newcommand{\add}[1]{{#1}}
\newcommand{\remove}[1]{\textcolor{rosso}{}}

\newtheorem{Ex}{Example}

\newtheorem{theorem}[Ex]{Theorem}

\newtheorem{Def}[Ex]{Definition}

\newtheorem{remark}[Ex]{Remark}

\allowdisplaybreaks

\crefname{lemma}{Lemma}{Lemmata}
\crefname{theorem}{Theorem}{Theorems}
\crefname{prop}{Proposition}{Propositions}
\crefname{claim}{Claim}{Claims}
\crefname{algorithm}{Algorithm}{Algorithms}
\crefname{Def}{Definition}{Definitions}
\crefname{assume}{Assumption}{Assumptions}
\crefname{remark}{Remark}{Remarks}
\crefname{Ex}{Example}{Examples}
\crefname{figure}{Figure}{Figures}
\crefname{section}{Section}{Sections}
\crefname{table}{Table}{Tables}
\crefname{enumi}{Statement}{Statements}
\crefname{line}{Step}{Steps}
\crefname{equation}{}{}

\newcommand{\R}{\mathbb{R}}

\newcommand{\B}{\{0,1\}}
\newcommand{\Z}{\mathbb{Z}}

\newcommand{\ceil}[1]{\left\lceil#1\right\rceil}
\newcommand{\norm}[1]{\left \|#1\right \|}

\definecolor{mycolor}{RGB}{200,0,0}

\newcommand{\baseSet}{\mathcal{X}}
\newcommand{\benefit}{\operatorname{\tau}}
\newcommand{\ineff}{\eta} 
\newcommand{\unfair}{\phi}

\usepackage{float}
\usepackage{tikz}
\usepackage{soul}
\usetikzlibrary{graphs, graphs.standard}

\usepackage{fullpage}

\allowdisplaybreaks
\title{Fairness over Time in Dynamic Resource Allocation with an Application in Healthcare}
\author[1]{Andrea Lodi}
\author[2]{Philippe Olivier}
\author[2]{Gilles Pesant}
\author[3]{Sriram Sankaranarayanan}

\affil[1]{Jacobs Technion-Cornell Institute, Cornell Tech and Technion - IIT, New York, USA}
\affil[2]{Polytechnique  Montr\'eal, Montr\'eal, Canada}
\affil[3]{IIM Ahmedabad, Ahmedabad, India}

\date{}
\begin{document}    
\maketitle
\begin{abstract}

Decision making problems are typically concerned with maximizing efficiency. In contrast, we address problems where there are multiple stakeholders and a centralized decision maker who is obliged to decide in a fair manner. Different decisions give different utility to each stakeholder. In cases where these decisions are made repeatedly, we provide efficient mathematical programming formulations to identify both the maximum fairness possible and the decisions that improve fairness over time, for reasonable metrics of fairness. We apply this framework to the problem of ambulance allocation, where decisions in consecutive rounds are constrained. With this additional complexity, we prove structural results on identifying fair feasible allocation policies and provide a hybrid algorithm with column generation and constraint programming-based solution techniques for this class of problems. Computational experiments show that our method can solve these problems orders of magnitude faster than a naive approach. 
\end{abstract}
\section{Introduction}
\label{sec:introduction}

A resource allocation problem can be defined as the problem of choosing an allocation from a set of feasible allocations to a finite set of stakeholders to minimize some objective function.

Generally, this objective function represents the efficiency of an allocation by considering, for example, the total cost incurred by all the stakeholders~\citep{resalloc}.

However, in some cases, such an objective may be inappropriate due to ethical or moral considerations. For instance, to whom should limited medical supplies go in a time of crisis~\citep{HeierStamm2017}?
These situations call for a different paradigm of decision-making that addresses the need of {\em fairness} in resource allocation.

{Resource allocation problems trace their roots to the 1950s when the division of effort between two tasks was studied~\citep{koopman53}. Fairness in resource allocation problems has been studied since the 1960s. A mathematical model for the fair apportionment of the U.S. House of Representatives considered the minimization of the difference between the disparity of representation between any pairwise states~\citep{burt63}. This problem was revisited about two decades later~\citep{ze81, katoh85}. The so-called \emph{minimax} and \emph{maximin} objectives in resource allocation, which achieve some fairness by either minimizing the maximum~(or maximizing the minimum) number of resources allocated to entities, have been studied at least since the 1970s~\citep{jacobsen71, py72}. A recent book on equitable resource allocation presents various models and advocates a lexicographic minimax/maximin approach for fair and efficient solutions~\citep{luss2012equitable}. This work further discusses multiperiod equitable resource allocation. Fair resource allocation over time is also considered in the dynamic case where resource availability changes over time and is solved using approximation algorithms~\citep{bampis2018}. A combination of efficiency and fairness in resource allocation is common in communication networks~\citep{Ogryczak2014,Ogryczak2014-b}, and can be found in various other fields, such as in some pickup and delivery problems~\citep{Eisenhandler2019}. }

Further, theoretical computer science has also been interested in a range of fair allocation problems. 
\citet{procaccia2015cake} provides a summary of some of the principles of fair allocation of a divisible good. 
Fair allocation of indivisible good is an active area of research since the seminal paper of \citet{alkan1991fair}.
This literature consider multiple notions of fairness like (i) {\em proportionality} where each of the $n$ stakeholders receive at least $1/n$-th of the total value of all goods (ii) {\em envy-freeability}, where each stakeholder weakly prefers their own bundle over anybody else's bundle, along with other weaker notions. 
Variations of the problem where goods appear in an online fashion have also been studied \citep{aleksandrov2020online}. 
A common underlying assumption in this area is that each stakeholder gets value only based on the set of items appropriated to them. 
In our manuscript though, with the motivation being ambulance allocation, we relax this assumption. 
Even if an ambulance is not placed at the base closest to a stakeholder, but say for example, at a base that is second closest to her, she could still get limited utility from the ambulance, as opposed to the case where it is placed much farther away.  

\paragraph{Our Contributions.} {Our first contribution is to formally set up an abstract framework for repeatedly solving a fair allocation problem such that enhanced fairness is achieved over time, as opposed to a single round of allocation. Typically, a fair allocation could turn out to be an inefficient allocation. The framework addresses this trade-off by explicitly providing adequate control to the decision-maker on the level of the desired efficiency of the solutions. }

{Then, we prove that the concept of achieving fairness over time is not useful should the set of feasible allocations be a convex set, as long as the value each stakeholder obtained is a linear function of the allocation. We prove this result irrespective of the measure of fairness one uses, as long as the measure has crucial technical properties mentioned formally later. Next, we show closed-form and tight solutions for the number of rounds required for perfectly fair solutions, for special choices of the set of allocations that could be of practical interest.}

{However, there might also be other cases of practical interest where such closed-form solutions are harder, if not impossible, to obtain. We provide an integer programming formulation to solve these problems. The formulation hence provided can be combined directly with delayed column generation techniques in case of large instances. }

With the above results and ideas, we consider the problem of allocating ambulances to multiple areas of a city, where the residents of each area are the stakeholders. This family of problems is easier than the general case as there exists an efficient greedy algorithm that provides reasonable solutions fast. However, such solutions could be far from optimal, and we provide examples where the greedy algorithm fails.
Besides that, in this family of problems, we also impose restrictions on how allocations in successive rounds could be. This is reminiscent of the real-life restriction that one does not want to relocate too many ambulance vehicles each day. This added constraint makes identifying good solutions much harder as the proposed column generation-technique becomes invalid now. To solve this problem, we identify a graph induced by any candidate solution and show that deciding the feasibility of a candidate solution provided by column generation is equivalent to identifying Hamiltonian paths in this graph. Hence, we provide multiple subroutines to retain the validity of the column generation-based approach. We also provide experimental results on the computational efficiency of our method.

The paper is organized as follows. \cref{sec:prelim} introduces some basic definitions and concepts. \cref{sec:simple} presents some theoretical proofs for simple cases, and \cref{sec:finitex} establishes a general framework which allows to tackle more complicated cases. This framework is used to solve a practical problem in \cref{sec:alp}, whose results are presented in \cref{sec:numerical}. Finally, some conclusions are drawn in \cref{sec:conclusion}.

\section{Preliminaries}\label{sec:prelim}
In this section, we formally define the terms used throughout the manuscript.
We study resource allocation in the context of fairness over time, meaning that resources are allocated to stakeholders over some time horizon. 
We use $\baseSet$ to denote the set of all feasible allocations, with $n\in\Z_{\ge 0},\,n\geq 2$ being the number of stakeholders among whom the allocations should be done in a fair way.

\begin{Def}[Benefit function]
The benefit function $\benefit:\baseSet\to\R^n$ is a function such that the $i$-th component $[\benefit(x)]_i$ refers to the benefit or utility enjoyed by the $i$-th stakeholder due to the allocation $x \in\baseSet$.
\end{Def}

Assume $\unfair$ to be a function that measures the unfairness associated with a given benefit pattern for the $n$ stakeholders---a concept formalized in \cref{Def:Unfair}---which is to be minimized. In this context, a basic fair resource allocation problem can be defined simply as

\begin{align}
    \min_{x\in\baseSet} \qquad \unfair(\benefit(x)). \label{eq:intro_spfa}
\end{align}

{Many decision-makers are public servants who are either elected or nominated, and who serve in their position for a predetermined amount of time. In the context of serving the public, these officials must often juggle with limited means to ensure that everyone they serve is catered for. Public satisfaction naturally plays a role in the evaluation of their performance. Herein lies the motivation to solve resource allocation problems that consider fairness over time.}

Assuming that there are $T$ rounds of decision making, model~\cref{eq:intro_spfa} can be expanded to
\begin{subequations}
\begin{alignat}{3}
         \min_{x(t), y} \quad & \unfair(y)   \\
\textrm{s.t.} \quad & y = \frac{1}{T} \sum_{t=1}^T \benefit(x(t)) && \\
                    & x(t) \in \baseSet, &&\forall\,t = 1,\ldots,T.
                    \end{alignat}
\end{subequations}
Here, we  implicitly assume that it is sensible to add the benefits obtained at different rounds of decision making, and use the average benefit over time to compute fairness.

Now, {in this section, we first define the properties that a general measure of unfairness should have for it to be considered valid in our context. To start with, we state that (i) if all the stakeholders of the problem get the same benefit or utility, then the unfairness associated with such an allocation of benefits should be 0, and (ii) the unfairness associated with benefits should be invariant to permutations of the ordering of stakeholders. 
}
\begin{Def}[Unfairness function]\label{Def:Unfair}
Given $y\in\R^n$, $\unfair:\R^n\to\R_{\ge 0}$ determines the extent of unfairness in the allocations, if the $i$-th stakeholder gets a benefit of $y_i$. Such a function $\unfair$ satisfies the following:
\begin{enumerate}
    \item  $\unfair(y_1,\ldots,y_n) = 0 \iff y_1 = y_2 = \ldots = y_n$,
    \item $\unfair(y_1,\ldots,y_n) = \unfair(\pi_1(y),\ldots,\pi_n(y))$ for any permutation $\pi$.
\end{enumerate}
In addition, if $\unfair$ is a convex function, we call $\unfair$ a {\em convex unfairness function}.
\end{Def}

In general, trivial solutions where no benefit is obtained by any of the stakeholders, i.e., when $\benefit(x)$ is a zero vector, are perfectly fair solutions! However, this results in a gross loss of efficiency. 

\begin{Def}[Inefficiency function]
Given $\baseSet$ and the benefit function $\benefit$, the inefficiency function $\ineff:\baseSet\to[0,1]$ is defined as 
\begin{equation}
    \ineff(x) \quad=\quad \left \{
    \begin{array}{cl}
         0& \text{ if } \bar f = \underline f,  \\
    \frac{\bar{f}-\sum_{i=1}^n [\benefit(x)]_i}{\bar{f} - \underline{f}}& \text{ otherwise,}
    \end{array}
    \right .
\end{equation}
where {$\bar{f} = \sup_{x\in\baseSet}\sum_{i=1}^n [\benefit(x)]_i$, $\underline{f} = \inf_{x\in\baseSet}\sum_{i=1}^n [\benefit(x)]_i$}, and $\bar f$ and $\underline f$ are assumed to be finite.
\end{Def}

\begin{remark}
Note that for all feasible $x\in\baseSet$, we indeed have $\ineff(x) \in [0,1]$. For the most efficient $x$, i.e., the $x$ (or allocation) that maximizes the sum of benefits, $\ineff(x) = 0$, while for the least efficient $x$, we have $\ineff(x) = 1$. Thus, $\ineff$ serves as a method of normalization of the objective values. 
\end{remark}

We now define a single-period fair allocation problem, subject to efficiency constraints. One might always choose $\ineff(x)=1$ to retrieve the problem in model \cref{eq:intro_spfa} without efficiency constraints.
\begin{Def}[Single-period fair allocation (SPFA) problem]
Given $\bar \ineff \in [0,1]$, the single-period fair allocation problem is to solve 
\begin{subequations}
\label{spfa}
\begin{alignat}{3}
         \min_{x} \quad & \unfair(\benefit(x))   \\
\textrm{s.t.} \quad & 
\ineff (x) \le \bar \ineff &&.
                    \end{alignat}
\end{subequations}
\end{Def}

\cref{ex:motiv} motivates and provides a mean of validation for the SPFA. It shows that with reasonable choices, we retrieve the intuitively fair solution.

\begin{Ex}
\label{ex:motiv}
Consider the case where $\baseSet = \{x \in \R^n_+ : \sum_{i=1}^nx_i = 1\}$, i.e., $\baseSet$ is a simplex. Further, assume that $[\benefit(x)]_i = \benefit_ix_i$ for some scalars $\benefit_i > 0$, and that, w.l.o.g., $\benefit_1 \ge \benefit_2 \ge \ldots \ge \benefit_n$.

For the choice of $\bar\ineff = 1$, i.e., with no efficiency constraints, the solution for the SPFA  problem is given by
\begin{subequations}
\begin{align}
    x^\star_i \quad&=\quad \frac{g}{\benefit_i}, \quad\forall i\\
    \text{where }\quad g \quad&=\quad \frac{1}{\sum_{i=1}^n \frac{1}{\benefit_i}}.
\end{align}
Clearly, each stakeholder enjoys a benefit of $g$, and hence the unfairness associated with this allocation is $0$. We can notice that $\ineff(x^\star) = \frac{\benefit_1 - ng}{\benefit_1 - \benefit_n}$.
\end{subequations}

Note that for the case where $n=2$, the above simplifies to 
\begin{align*}
    x_1^\star \quad&=\quad \frac{\benefit_2}{\benefit_1+\benefit_2}\\
    x_2^\star \quad&=\quad \frac{\benefit_1}{\benefit_1+\benefit_2}\\
    \ineff(x^\star) \quad&=\quad \frac{\benefit_1^2 - \benefit_1\benefit_2}{\benefit_1^2 - \benefit_2^2}.
\end{align*}
\end{Ex}

\noindent We now define the fairness-over-time problem.  

\begin{Def}[$T$-period fair allocation ($T$-PFA) problem]\label{Def:TPFA}
Given $\bar \ineff \in [0,1]$ and $T \in \Z_{\ge 0}$ with $T\geq 2$, the $T$-period fair allocation problem is to solve
\begin{subequations}
\begin{alignat}{3}
         \min_{x(t), y} \quad & \unfair(y)   \\
\textrm{s.t.} \quad & y = \frac{1}{T} \sum_{t=1}^T \benefit(x(t)) && \\
& 
\ineff(x(t)) \le  \bar\ineff
, &&\quad\forall\,{t} = 1,\ldots,T \label{eq:FOT:eff} \\
& x(t) \in  \baseSet
, &&\quad\forall\,{t} = 1,\ldots,T. 
                    \end{alignat}\label{eq:FOT}
\end{subequations}
We say that a fair-allocation problem (SPFA or $T$-PFA) has {\em perfect fairness} if the optimal objective value of the corresponding optimization problems is 0. 
\end{Def}

\begin{Ex}[Usefulness of the $T$-PFA]\label{ex:tpfa_useful}
Let $\baseSet = \{x \in \B^2:x_1 + x_2 = 1\}$. Further, let $\benefit(x) = (2x_1, x_2)$. Let $\bar \ineff = 1$. Note that, in the case of the SPFA, the optimal objective is necessarily nonzero, since the benefits of the two stakeholders are unequal for every feasible solution. However, consider the 3-PFA. Now, if $x(1) = (1,0)$, $x(2) = (0,1)$, $x(3) = (0,1)$, $y = \left( \frac{2}{3},  \frac{2}{3}\right)$. So, for any choice of $\unfair$, the optimal objective value is $0$, which is strictly better than the SPFA solution.
\end{Ex}

\section{Simple Cases}
\label{sec:simple}
\subsection{Convex $\baseSet$}
We have motivated in \cref{ex:tpfa_useful} that ensuring fairness over multiple rounds could offer better results than seeking fairness in a single round. We now show that this holds only for a nonconvex $\baseSet$. In other words, if $\baseSet$ is convex and if $\benefit$ is a linear function, we necessarily get no improvement in $T$-PFA over SPFA.
\begin{theorem}\label{thm:TPFAconvex}
Let $f^\star$ and $f^\star_T$ be the optimal values of SPFA and $T$-SPFA for some nonnegative integer $T$. If $\baseSet$ is convex and if $\benefit$ is linear, then $f^\star = f^\star_T$. 
\end{theorem}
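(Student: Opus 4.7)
The plan is to show both inequalities $f^\star_T \le f^\star$ and $f^\star \le f^\star_T$. The first direction is essentially trivial: given any SPFA-optimal $x^\star$, setting $x(t) = x^\star$ for every $t = 1,\ldots,T$ yields a $T$-PFA-feasible point (the efficiency constraints hold since they hold for $x^\star$, and $x^\star \in \baseSet$) with $y = \benefit(x^\star)$, and therefore objective value $\unfair(\benefit(x^\star)) = f^\star$. Hence $f^\star_T \le f^\star$.

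For the reverse direction, I would start from an arbitrary $T$-PFA-feasible solution $(x(1),\ldots,x(T),y)$ and construct an SPFA-feasible point with no worse objective. The natural candidate is the average $\hat{x} = \frac{1}{T}\sum_{t=1}^T x(t)$. Convexity of $\baseSet$ immediately gives $\hat{x}\in\baseSet$. Linearity of $\benefit$ then yields
\begin{equation*}
\benefit(\hat{x}) \;=\; \frac{1}{T}\sum_{t=1}^T \benefit(x(t)) \;=\; y,
\end{equation*}
so the SPFA objective at $\hat{x}$ equals $\unfair(y)$, matching the $T$-PFA objective exactly.

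The only remaining check is that $\hat{x}$ satisfies the efficiency constraint $\ineff(\hat{x})\le\bar\ineff$. This is where one must be slightly careful, but it follows quickly: the sum $\sum_{i=1}^n [\benefit(\cdot)]_i$ is linear by linearity of $\benefit$, and $\ineff$ is obtained from this sum by an affine (in fact decreasing affine) transformation when $\bar f \ne \underline f$, so
\begin{equation*}
\ineff(\hat{x}) \;=\; \frac{1}{T}\sum_{t=1}^T \ineff(x(t)) \;\le\; \bar\ineff,
\end{equation*}
while if $\bar f = \underline f$ the inefficiency is identically zero and the constraint is vacuous. Specializing to a $T$-PFA-optimal solution gives $f^\star \le \unfair(y) = f^\star_T$, which combined with the first direction proves $f^\star = f^\star_T$.

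The argument is short; the only conceptual subtlety is noticing that $\unfair$'s properties (\cref{Def:Unfair}) play no role whatsoever, because we produce an SPFA point whose benefit vector is \emph{identical} to the $T$-PFA average benefit $y$, bypassing any need for convexity or Jensen-type reasoning on $\unfair$. The main (mild) obstacle I anticipate is handling the boundary case $\bar f = \underline f$ cleanly so that the averaging-of-inefficiencies step is justified in both branches of the definition.
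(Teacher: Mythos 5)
Your proposal is correct and follows essentially the same route as the paper: average the $T$-PFA solution, use convexity of $\baseSet$ for feasibility and linearity of $\benefit$ to preserve the benefit vector, and note that the efficiency constraint is a linear (hence convexity-preserving) condition. You are in fact slightly more complete than the paper, which only writes out the $f^\star \le f^\star_T$ direction and leaves the trivial repeat-the-SPFA-optimum direction implicit.
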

\begin{proof}
Given that $\benefit$ is a linear function, we can write $[\benefit(x)]_i = \benefit_i^{\mathsf T}x$ for an appropriately chosen $\benefit_i$, for $i=1,\ldots,n$.
Suppose that $x^\star(t)$, for $t=1,\ldots, T$, and $y^\star$ solve the $T$-PFA problem. By our notation, this has an objective value of $\unfair(y^\star) = f^\star_T$. Now, we construct a solution for the SPFA problem with an objective value equal to $f^\star_T$. 
For this, consider the point $\bar x = \frac{1}{T} \sum_{t=1}^T x^\star(t)
$. 
First we claim that $\bar x\in\baseSet$.

We have that $x(t) \in \hat{\baseSet}$ where $\hat\baseSet = \{x \in \baseSet : \ineff(x) \leq \bar\ineff\}$.
Now, we observe that $\hat\baseSet$ is a convex set given by $\{x \in \baseSet : (\sum_{i=1}^n\benefit_i)^{\mathsf T} x \geq \bar f - (\bar f - \underline f)\bar \ineff  \}$, where $\bar f$ and $\underline f$ are constants. Since $\bar x$ is obtained as a convex combination of $x^\star(t) \in \hat \baseSet$, it follows that  $\bar x \in \hat \baseSet$. 
Finally, from the linearity of $\benefit$, we can set $\bar y = y^\star$. This shows that $(\bar x, \bar y)$ is feasible. Since $\bar y = y^\star$, it follows that their objective function values are equal.\hfill $\blacksquare$
\end{proof}

Having proven that multiple rounds of allocation cannot improve the fairness when $\baseSet$ is convex and $\benefit$ is linear, we show that it is not necessarily due to the fact that perfect fairness is obtainable in SPFA. \cref{ex:lb} shows an instance where the unfairness is strictly positive with a single round of allocation, irrespective of the choice of $\unfair$. Naturally, due to \cref{thm:TPFAconvex}, perfect fairness is not possible with multiple rounds of allocation either.

\begin{Ex} \label{ex:lb}
Consider a fair allocation problem where $\baseSet = \{x\in\R^3_{\ge 0}:x_1+x_2+x_3 = 1\}$. Clearly $\baseSet$ is convex. Consider linear $\benefit$ defined as
\begin{align*}
    [\benefit(x)]_1 \quad&=\quad x_1 + \frac{3}{4} x_2 + \frac{3}{4} x_3\\
    [\benefit(x)]_2 \quad&=\quad x_2\\
    [\benefit(x)]_3 \quad&=\quad x_3.
\end{align*}
Let $\bar x$ be a fair allocation. In such a case, we need $[\benefit(\bar x)]_1 = [\benefit(\bar x)]_2 = [\benefit(\bar x)]_3$. Thus, we need 
\begin{align*}
    g \quad&=\quad x_1 + \frac{3}{4} x_2 + \frac{3}{4} x_3\\
    g \quad&=\quad x_2\\
    g \quad&=\quad x_3
\end{align*} for some $g \in \R_+$. Solving this linear system gives the {\em unique} fair allocation as allocations of the form $(x_1, x_2, x_3) = (-0.5g, g, g)$, which necessarily violates the non-negativity constraints in the definition of $\baseSet$. Any other allocation necessarily has $\unfair(\benefit(x)) > 0$.
\end{Ex}

\subsection{Simplicial $\baseSet$}

We now show that perfect fairness is attainable under certain circumstances.
In the theorem below, $LCM (\ldots)$ refers to the least common multiple of the set of integers in its arguments. 

\begin{theorem}\label{thm:TPFAsimpl}
Let $\baseSet = \{x \in \Z^n_{\ge 0}: \sum_{i=1}^nx_i \leq a\}$ for some positive integer $a$. 
Let the benefit function be $[\benefit(x)]_i = \benefit_ix_i$, where each $\benefit_i$ is a positive integer. Assume, w.l.o.g., that $\benefit_1 \ge \benefit_2 \ge \ldots \ge \benefit_n > 0$. 
Let $L = LCM(\benefit_1,\ldots,\benefit_n)$.  Then,
\begin{enumerate}
    \item \add{The only perfectly fair solution within a number of periods strictly lesser than $\bar T= \ceil{\frac{1}{a}\sum_{i=1}^n \frac{L}{\benefit_i}}$ is the trivial solution. i.e., $x=0$.}
    \remove{Perfect fairness cannot be achieved within a number of periods strictly lesser than $\bar T= \ceil{\frac{1}{a}\sum_{i=1}^n \frac{L}{\benefit_i}}$.} 
    \item Perfect fairness is achieved with $\bar T$ periods if $ \frac{\benefit_1- \min(\benefit_n,\benefit_1/a)}{\benefit_1 } \le \bar\ineff < 1
    $ and $\bar T > 1$. 
    \item Perfect fairness can be unattainable within $\bar T$ periods if $\bar\ineff < \frac{\benefit_1- \min(\benefit_n,\benefit_1/a)}{\benefit_1 }$.
\end{enumerate}
\end{theorem}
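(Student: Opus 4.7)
I would handle all three parts through a common lens: for a candidate schedule of length $T$, set $X_i := \sum_{t=1}^T x_i(t) \in \Z_{\ge 0}$, so stakeholder $i$'s average benefit is $\benefit_i X_i / T$. Perfect fairness is equivalent to requiring $\benefit_i X_i$ to take some common integer value $M \ge 0$ across all $i$. Either $M = 0$, which forces $X_i = 0$ for every $i$ and hence $x\equiv 0$ (the trivial solution), or $M > 0$, in which case $M$ is a positive common multiple of all the $\benefit_i$'s and therefore $M \ge L$.

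Part~1 then follows quickly: in the nontrivial case $X_i \ge L/\benefit_i$, so $\sum_i L/\benefit_i \le \sum_i X_i \le aT$, which yields $T \ge \ceil{(\sum_i L/\benefit_i)/a} = \bar T$. Hence any perfectly fair schedule of length strictly less than $\bar T$ must be trivial.

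For part~2, the first step is to rewrite the efficiency constraint. On this $\baseSet$ one has $\bar f = a\benefit_1$ and $\underline f = 0$, so $\ineff(x(t)) \le \bar\ineff$ is equivalent to $\sum_i \benefit_i x_i(t) \ge B$ with $B := (1-\bar\ineff)\, a\, \benefit_1$. The hypothesis rearranges to $B \le \min(\benefit_1, \benefit_n a)$, which means that in each period it suffices either to allocate at least one unit to stakeholder~$1$ (benefit $\ge \benefit_1 \ge B$) or to allocate a full $a$ units to any single stakeholder (benefit $\ge \benefit_n a \ge B$). I would then construct an explicit schedule attaining $X_i = L/\benefit_i$. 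Let $r := a\bar T - \sum_i L/\benefit_i \in [0, a-1]$ be the slack. If $X_1 \ge \bar T$, I would place one unit of stakeholder~$1$ in every period and distribute the remaining $\sum_i X_i - \bar T$ units across periods greedily, respecting the per-period cap $a$. If $X_1 < \bar T$, I would pick $X_1$ periods to each receive one unit of stakeholder~$1$ and saturate each of the remaining $\bar T - X_1$ periods with exactly $a$ units drawn from stakeholders $i \ge 2$; a short accounting using $r \le (a-1)X_1$ confirms that the non-stakeholder-$1$ totals balance and that per-period caps are respected. In both cases every period satisfies $\sum_i \benefit_i x_i(t) \ge B$, so the schedule is feasible and perfectly fair.

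For part~3, I would exhibit a small witness: take $a = 1$, $n = 2$, $\benefit_1 = 2$, $\benefit_2 = 1$, giving $L = 2$ and $\bar T = 3$, with forced totals $X_1 = 1$ and $X_2 = 2$. Every perfectly fair schedule then has two periods that allocate their lone unit to stakeholder~$2$, each with inefficiency $1/2$, matching the threshold $(\benefit_1 - \min(\benefit_n, \benefit_1/a))/\benefit_1 = 1/2$; any $\bar\ineff$ strictly below this value makes perfect fairness infeasible. The main obstacle is the bookkeeping in the case $X_1 < \bar T$ of part~2: one must force a single stakeholder-$1$ unit into each ``short'' period, saturate every other period to exactly $a$ units drawn from the remaining pool, and close out the per-stakeholder totals without exceeding any per-period cap, which is exactly where the slack bound $r \le (a-1)X_1$ is needed.
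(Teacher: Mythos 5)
Your proposal is correct. Part~1 is essentially the paper's own argument: perfect fairness forces the common benefit $M=\benefit_iX_i$ to be a positive multiple of $L$ (unless $M=0$, which is the trivial solution), and then $\sum_i L/\benefit_i\le\sum_iX_i\le aT$ gives $T\ge\bar T$. Your Part~3 witness $(a,n,\benefit_1,\benefit_2)=(1,2,2,1)$ is exactly the member $\hat T=3$, $a=1$ of the paper's counterexample family $\benefit_1=(\hat T-1)a$, $\benefit_2=1$; the paper exhibits the whole family, but since the claim is existential (``can be unattainable''), a single witness suffices. The genuine divergence is in Part~2. The paper uses a reverse-lexicographic greedy schedule --- fill periods with stakeholder $n$'s units until the quota $L/\benefit_n$ is met, then stakeholder $n-1$, and so on, finishing with stakeholder $1$ --- and argues that the most inefficient period is either the first (all $a$ units to stakeholder $n$, inefficiency $(\benefit_1-\benefit_n)/\benefit_1$) or the last (possibly a single unit, necessarily stakeholder $1$'s, inefficiency $(\benefit_1-\benefit_1/a)/\benefit_1$). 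You instead translate the efficiency constraint into a per-period benefit floor $B=(1-\bar\ineff)a\benefit_1\le\min(\benefit_1,a\benefit_n)$ and build a schedule in which every period either carries a unit of stakeholder $1$ or is saturated with $a$ units from stakeholders $i\ge 2$; your capacity checks (remaining units at most $(a-1)\bar T$ in the first case, and $r\le(a-1)X_1$ in the second, which holds since $r\le a-1$ and $X_1=L/\benefit_1\ge 1$) are correct. Your construction is arguably the more carefully certified of the two, since the paper's assertion that the worst period is ``first or last'' is justified only informally, whereas your two-case split makes per-period feasibility explicit; what the paper's version buys is a shorter description of the schedule itself. Both routes are valid.
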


\begin{proof}
\textbf{Part 1.} Observe that an unfairness of $0$ can be achieved when for some $g\in \R$ we have $\sum_{t=1}^Tx_i(t) = g/\benefit_i$ for every $i\in\{1,\ldots,n\}$. By integrality of each $x_i(t)$ and $T$, $g$ must be a multiple of $L$, giving
\begin{align*}
    \sum_{t=1}^T x_i(t) \quad&=\quad \frac{\alpha L}{\benefit_i} \label{eq:TPFAsimplA}
\end{align*}
for some integer $\alpha \geq 0$. Because $\bar \ineff < 1$, $\alpha$ must be nonzero. Summing the above equation over $i=1,\ldots,n$, we obtain
\begin{align*}
    \sum_{t=1}^T\sum_{i=1}^n x_i(t) \quad&=\quad \sum_{i=1}^n\frac{\alpha L}{\benefit_i}.
\end{align*}
Using the fact that $\sum_{i=1}^n x_i(t) \leq a$, we obtain
\begin{align*}
    \alpha\sum_{i=1}^n\frac{ L}{\benefit_i}  \quad\le\quad \sum_{t=1}^Ta \quad=\quad aT \quad\iff\quad \frac{\alpha}{a} \sum_{i=1}^n \frac{L}{\benefit_i} \quad \le\quad T.
\end{align*}
Since $\alpha \ge 1$, the minimum LHS is attained when $\alpha=1$ and given that $T$ is integer, we obtain $$\bar T:=\ceil{\frac{1}{a}\sum_{i=1}^n\frac{L}{\benefit_i}}.$$

\textbf{Part 2. } We prove this part by exhibiting a solution feasible for $\bar T$-PFA that achieves perfect fairness over time and is hence optimal for it. 
Consider a reverse-lexicographic allocation method where all allocations are made to stakeholder $n$ until the total allocation towards $n$ sums to $L/\benefit_n$. i.e., if $L/\benefit_n \le a$, then let $x_n(1) = L/\benefit_n$. Otherwise let $x_n(1) = a$ and allow $x_n(2) = \min \{a, L/\benefit_n - a\}$. 
Repeat this process until the total allocation towards $n$ adds up to $L/\benefit_n$. 
Next, repeat the same process for $n-1$ so that the total allocation towards $n-1$ adds up to $L/\benefit_{n-1}$.
allocate similarly for $n-1$, $n-2$ and so on up to stakeholder $1$.
Observe that each of these allocations at any time $t$ is in $\baseSet$ by construction.
Again by construction, each player $i$ has a value given by $\benefit_i \times \frac{L}{\benefit_i} = L$. 
If we show that the allocation in each time period has $\ineff \le \bar\ineff$, we are done.
Consider a period $\hat t\in \{1,\ldots,\bar T\}$ where the allocation is the most inefficient. 
This would either be in the first or the last period, i.e., $\hat t=1$ or $\hat t=\bar T$. 
\add{This is because $T=1$ corresponds to adding the most possible value to stakeholder $n$, i.e., the one who values the allocation the least. Alternatively, the most inefficient allocation could be at period $T$, because in the last period the constraint $\sum_{i=1}^nx_i \le a$ might hold with a strict inequality, leading to inefficiencies.}
We will show in either case the inefficiency is at most $\bar\ineff$. 
If the first period, $\ineff$ could be large as all allocations could be to $n$, i.e., $x(1) = (0,0,\ldots,0,a)$. $\ineff$ corresponding to this allocation is $\frac{a\benefit_1- a\benefit_n}{a\benefit_1 - 0} = \frac{\benefit_1- \benefit_n}{\benefit_1} \leq \bar\ineff$. Alternatively, in the last period, there could be minimal allocation of $\sum_{i=1}^n x_i(\bar T) = 1$, but in that case, it will necessarily be made to the first stakeholder, i.e., $x(T) = (1,0,0,\ldots,0)$. 
$\ineff$ corresponding to this allocation is $\frac{a\benefit_1- \benefit_1}{a\benefit_1 - 0} = \frac{\benefit_1- \benefit_1/a}{\benefit_1} \leq \bar\ineff$.

\textbf{Part 3.} We show that perfect fairness might be unattainable with stronger efficiency requirements, by providing a family of counterexamples, each with $n=2$ stakeholders. 
Consider $\hat T \geq 2$ and
choose $\benefit_1 = (\hat T-1)a$ and  $\benefit_2 = 1$. From the first part, perfect fairness is not obtainable for $T < \bar T = \ceil{\frac{1}{a} \left( 
\frac{(\hat T-1)a}{(\hat T-1)a} + \frac{(\hat T-1)a}{1}
\right)} = 
\hat T
$ periods. We now show that perfect fairness is not possible with $\hat T$ periods either. 

Consider the  set of solutions where perfect fairness is achieved in $\hat T$ time steps.
Any such solution should award equal total utility to both stakeholders $1$ and $2$. 
The smallest possible utility is the LCM $(\bar T-1)a$, \add{as even allocating $1$ unit to player 1 already results in a utility of $\hat {T}-1$ for them}. 
To achieve this utility 
$(\hat T-1)a$ allocations are required for $2$.
Consider the way in which such an allocation can be done. 
Up to permutations across periods, They are all of the form
\begin{align*}
 x(1)\quad&=\quad (0, a - \delta_1) \\
 x(2)\quad&=\quad (0,a - \delta_2) \\
 &\vdots \\
 x(\hat T-1)\quad&=\quad (0, a - \delta_{\hat T -1}) \\
 x(\hat T)\quad&=\quad \left (\sum_{i=1}^{\hat T -1}\delta_i , 1 \right )   
\end{align*}
   for some integers $\delta_i \in \{1,2,\ldots, a\}$ satisfying $0 \leq \sum_{i=1}^{\hat T -1}\delta_i \leq  a-1$. 
   From the hypothesis, we need $\bar\ineff < \frac{\benefit_1- \min(\benefit_n,\benefit_1/a)}{\benefit_1 } = \frac{(\hat T-1)a - 1}{(\bar T-1)a}$.
   
   Now consider the inefficiency function of the allocation $x(1)$. This is minimized when $\delta_1=0$. In that case, $\ineff(x(1)) = \frac{(\hat T -1)a - 1 + \delta_1/a}{\hat T - 1}= \frac{(\hat T -1)a - 1}{(\hat T - 1)a} > \bar \ineff$ shows that it is infeasible. Thus, any perfectly fair allocation for $\bar T$-PFA problem is an infeasible allocation, providing the necessary counterexample. \hfill $\blacksquare$
\end{proof}

The first part of \cref{thm:TPFAsimpl} states that for any $n$ and a $\baseSet$ of the specified form, perfect fairness is possible after a specific, finite number of periods,  $\bar T$, provided the efficiency requirements are as stated. 
The second part shows tightness of the results saying, for any other stricter efficiency requirements, there exists a counterexample with just two stakeholders, such that perfect fairness is impossible in $\bar T$ steps. 

\begin{Ex}
Consider \cref{thm:TPFAsimpl} applied to \cref{ex:tpfa_useful}. We have $\baseSet = \{(1,0), (0,1)\}$, and thus $a=1$. Then, $\benefit(x) = (2x_1, x_2)$, so $\benefit_1=2$ and $\benefit_2=1$, and $\benefit_1 \geq \benefit_2 \geq 0$ holds. In addition, $L = LCM(2, 1) = 2$. Then, $\bar T = \ceil{\frac{1}{a}\sum_{i=1}^n \frac{L}{\benefit_i}} = \ceil{\frac{1}{1}(\frac{2}{2} + \frac{2}{1})} = 3.$
\end{Ex}

\subsection{Sparse Simplicial $\baseSet$}
The sparse simplicial form for $\baseSet$ is another interesting case from a practitioner's perspective. For example, a government might want to allot money {to} $n$ possible projects, but if it divides among all of them, then it might be insufficient for any of them. So, there could be a restriction that the government allots it to at most $r$ projects.
\begin{theorem}\label{thm:TPFAcard}
Let $\baseSet =\{x \in \R^n_{\ge 0}: \sum_{i=1}^n x_i \leq a, \norm{x}_0 \leq r \}$ where $\norm{\cdot}_0$ is the sparsity pseudo-norm, which counts the number of non-zero entries in its argument.
Assume that the benefit function is $[\benefit(x)]_i = \benefit_ix_i$, where each $\benefit_i$ is a positive integer. Assume, w.l.o.g., that $\benefit_1 \ge \benefit_2 \ge \ldots \ge \benefit_n>0$. 
We denote by $(p,q)$ the quotient and the remainder for $\frac{n}{r}$.
Let $\bar T= \ceil{\frac{n}{r}}$ and $1>\bar \ineff \ge \frac{a\benefit_1 - qv}{a\benefit_1}$ where $v = a/\max \left \{\sum_{i= (\bar T-1)r+1 }^{n} \frac{1}{\benefit_{i}}, \sum_{i=(p-1)r+1}^{pr} \frac{1}{\benefit_{i}}\right \}$. There is no perfect assignment if $T < \bar T$ whereas perfect fairness is attainable in $\bar T$ periods.
\end{theorem}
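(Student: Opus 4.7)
The plan is to treat the lower bound $T\ge \bar T$ and the attainability at $T=\bar T$ separately, mirroring the structure of \cref{thm:TPFAsimpl}.

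For the lower bound, the key observation is that the sparsity constraint $\norm{x}_0\le r$ limits each period to at most $r$ positive coordinates. Since the hypothesis gives $\bar\ineff<1$ and since the zero vector has $\ineff(0)=1$, no period may use $x(t)=0$, so each $x(t)$ has at least one positive entry and $\sum_i \benefit_i x_i(t)>0$. Perfect fairness requires a common per-stakeholder total benefit $g$, and the strict positivity just derived forces $g>0$; in particular every one of the $n$ stakeholders must appear with a positive coordinate in at least one of the $T$ periods. Counting gives $Tr\ge n$, hence $T\ge \ceil{n/r}=\bar T$.

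For attainability at $T=\bar T$, I would exhibit an explicit block allocation: partition $\{1,\ldots,n\}$ into consecutive blocks $S_k=\{(k-1)r+1,\ldots,\min(kr,n)\}$ for $k=1,\ldots,\bar T$, and in period $k$ set $x_i(k)=v/\benefit_i$ for $i\in S_k$ and $x_i(k)=0$ otherwise. Each stakeholder receives its entire benefit in a single period, namely $\benefit_i\cdot v/\benefit_i=v$, so the averaged vector $y$ is constant and $\unfair(y)=0$ by \cref{Def:Unfair}. Sparsity $\norm{x(k)}_0=|S_k|\le r$ is immediate, and the budget inequality $\sum_{i\in S_k} v/\benefit_i\le a$ follows from the definition of $v$: since the $\benefit_i$ are sorted nonincreasingly, among all blocks the sum $\sum_{i\in S_k} 1/\benefit_i$ is maximized either by the last full block $S_p$ or, when $q>0$, by the tail block $S_{\bar T}$, and the maximum of these two is precisely the denominator that defines $v$.

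The remaining feasibility check is the efficiency constraint. With $\bar f=a\benefit_1$ (attained by concentrating the whole budget on stakeholder $1$, which is admissible because $r\ge 1$) and $\underline f=0$, one computes $\ineff(x(k))=1-|S_k|v/(a\benefit_1)$, so the largest inefficiency occurs in the block with the fewest members, namely the tail block of size $q$ when $q>0$. This yields $\ineff(x(k))\le (a\benefit_1-qv)/(a\benefit_1)$, matching the assumed lower bound on $\bar\ineff$. The main bookkeeping obstacle is to track the two different extremal blocks---one governing the budget (largest sum of reciprocal benefits) and one governing the efficiency (smallest cardinality)---and to correctly match each to its role in the definition of $v$ and of the efficiency threshold; once those extremal indices are identified, the remaining verifications are routine.
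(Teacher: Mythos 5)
Your proof is correct and follows essentially the same route as the paper's: the identical counting argument against the sparsity budget for the lower bound (with the welcome extra care of using $\bar\ineff<1$ to rule out the all-zero allocation), and the same consecutive-block allocation with the same identification of the extremal blocks for the budget and efficiency checks.
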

\begin{proof}
First observe that at most $r$ stakeholders can be provided value in any given period. Thus except the case where everybody gets a value of $0$, perfect fairness is impossible in any fewer than $\bar T = \ceil{\frac{n}{r}}$ periods. 

Now we show that perfect fairness is possible in $\bar T$ periods, if the conditions in the theorem statement are satisfied. Consider the following allocation: In period 
$t$ for $1\leq t\leq \bar T-1$, allocate  $\frac{v}{\benefit_i}$ for $i =  (t-1)r+1,(t-1)r+2,\ldots,tr$ and $0$ to the rest. 
For $t=\bar T$, allocate $\frac{v}{\benefit_i}$ for $i=(\bar T-1)r+1,\ldots,n$.
Note that in each period $t< \bar T$, we allocate exactly to $r$ stakeholders, and for $t=\bar T$, \add{if $q=0$, we allocate to $r$ stakeholders, else } we allocate to $q$ stakeholders, thus always satisfying $\Vert x \Vert_0 \le r$.

In any period $t<\bar T$, note that $\sum_{i=1}^n x_i(t) = \sum_{i=(t-1)r+1}^{tr} \frac{v}{\benefit_i} \le v\sum_{i=(p-1)r + 1}^{pr} \frac{1}{\benefit_i} \le v \max \left\{
\sum_{i=(\bar T-1)r+1}^{n} \frac{1}{\benefit_{i}}, \sum_{i=(p-1)r + 1}^{pr} \frac{1}{\benefit_i}
\right\} = a
$  satisfying the inequality constraint. For $t=\bar T$, we have $
\sum_{i=1}^n x_i(t) = \sum_{i=(\bar T-1)r+1}^n \frac{v}{\benefit_i} \le v \max \left\{
\sum_{i=(\bar T-1)r+1}^{n} \frac{1}{\benefit_{i}}, \sum_{i=(p-1)r + 1}^{pr} \frac{1}{\benefit_i}
\right\} = a
$, again satisfying the inequality constraint. 

We can ensure that in the first $\bar T -1$ periods, each of the allocated players receives a value of $v$, and hence the total benefit of the allocation is $rv$, giving the $\ineff(x(t)) = \frac{a\benefit_1-rv}{a\benefit_1} \leq \bar\ineff $ since $q<r$.
In the last period, we allocate a utility of $v$ to $q$ players if $q>0$ else to $r$ players. The total benefit obtained in this round is at least $qv$. Feasibility follows as before and  $\ineff(x(\bar T)) = \frac{a\benefit_1-qv}{a\benefit_1} \leq \bar\ineff$, which is feasible. Perfect fairness follows  since the benefit to each player is $v$. 
\hfill $\blacksquare$.
\end{proof}
\begin{remark}
Unlike the setting in \cref{thm:TPFAsimpl}, \cref{thm:TPFAcard} is not tight with respect to $\bar \ineff$. In other words, it remains unknown whether decreasing the allowed value of inefficiency $\bar\ineff$ still allows one to achieve perfect fairness in $\bar T$ periods. 
\end{remark}
\begin{remark}
We note that the above results, \cref{thm:TPFAconvex,thm:TPFAsimpl,thm:TPFAcard}, are agnostic to the choice of $\unfair$, and only use the fact that $\unfair(x_1,\ldots.x_m) = 0 \iff x_1 = \ldots = x_n$. Any result that holds for $\unfair(\cdot) \neq 0$ has to necessarily depend upon the choice of $\unfair$.
\end{remark}

\begin{Ex}
Consider \cref{thm:TPFAcard} applied to the following setting. $\baseSet = \{x\in\mathbb{R}_{\ge 0}^3: x_1+x_2+x_3 \le 1 \norm{x}_0 \le 2 \}$. Let $\benefit_1 = 5$ and $\benefit _2 =3$.
Now $\bar T = \ceil {\frac{n}{r}} = \ceil {\frac{3}{2}} = 2$.
One can calculate that $v=2\frac{2}{9}$, $\bar\ineff = \frac{5}{9}$.
The allocation $\left(\frac{4}{9},\frac{5}{9},0\right)$ in period 1 and $\left( 0, 0, \frac{740}{999} \right)$ in period 2, gives all players a utility of $2\frac{2}{9}$ achieving perfect fairness. 
\end{Ex}

\section{General Combinatorial $\baseSet$}
\label{sec:finitex}
In many practical areas of interest, $\baseSet$ could be more complicated than the sets presented in \cref{sec:simple}. Let $\baseSet = \{x^1,x^2,\ldots,x^k\}$. We assume that $\baseSet$ is finite, but typically has a large number of elements, given in the form of a solution to a combinatorial problem.
We consider a benefit function where $\benefit(x) = \Gamma x$, for some matrix $\Gamma$.
We thus have $[\benefit(x) ]_i = \benefit_i^{\mathsf T}x $ for $i=1,\ldots,n$.
The efficiency constraint here is trivial, as the inefficient allocations $x^j$ can be removed from $\baseSet$ to retain another (smaller) finite $\baseSet$. In fact, with linear $\benefit$, the efficiency constraint is a linear inequality of the form $\sum_{i=1}^n \benefit_i^{\top}x \geq \overline{f} - \bar\ineff (\overline{f}-\underline{f}) $.

Let $q_j$ be the {\em number of times} the allocation $x^j\in\baseSet$ is chosen over $T$ rounds of decision. In this setting, the $T$-PFA problem can be restated as\footnote{The reader may have observed that an approach based on column generation would lend itself well for such a model. This is discussed in \cref{sec:alp}.} 
\begin{subequations}
\begin{alignat}{3}
         \min_{q, y} \quad & \unfair(y)   \\
\textrm{s.t.} \quad &y_i = \frac{1}{T}\benefit_i^{\mathsf T}\left(\sum_{j=1}^kq_jx^j\right), &&\quad\forall\,i = 1,\ldots,n\\
    &\sum_{j=1}^kq_j = T&&\label{eq:fbs:t}\\
    &q_j \in \Z_{\ge 0}, &&\quad\forall\,j = 1,\ldots,k.
\end{alignat}\label{eq:finiteBaseSet1}
\end{subequations}

Note that since the theorems of \cref{sec:simple} do not extend to this case, and since \cref{ex:lb} shows that a perfectly fair solution may not even exist, we know neither the value of the fairest feasible solution, nor the smallest value of $T$ that guarantees such a solution. We now present a two-phase integer program that finds the smallest value of $T$ that guarantees the fairest feasible solution. In the first phase, we are interested in finding the fairest feasible solution. This is achieved by solving model \cref{eq:finiteBaseSet1}, replacing \cref{eq:fbs:t} with $ \sum_{j=1}^kq_j \ge 1$
to ensure that not only the fairest solution is returned, but that this solution not be $(0, 0, \ldots, 0)$. Let $\unfair^\star$ be the value of the solution found in the first phase. In the second phase, we are interested in finding the smallest $T$ which can accommodate a solution of value $\unfair^\star$. This is achieved by solving model \cref{eq:finiteBaseSet1}, again replacing \cref{eq:fbs:t} with
$
 \sum_{j=1}^k{q_j} \ge 1,
$
adding
$
 \unfair(y) = \unfair^\star,
$
and changing the objective to
$
 \min_{q,y} \sum_{j=1}^kq_j.
$

{We should note that the value of~$T^{\star}$ found by the second phase could be quite high, even for simple choices of $\baseSet$. If perfect fairness can be attained in a time horizon of size, say, 10000, and that a discrete time point represents a day, then this period of 27-odd years would probably not be suitable for most practical applications, since it is only by reaching the end of the time horizon that optimal fairness is guaranteed. In practice, then, large time horizons can be problematic. This motivates the need for consistent fairness on a smaller scale.}

{Some compromises can be made which would mitigate this issue. Manually fixing $T$ to a value not higher than the expected duration of the problem would ensure a fair distribution of resources, since reaching the end of the time horizon would be assured. Accepting some degree of unfairness would also decrease the length of the time horizon and achieve similar results. \cref{ex:fairness_delayed} illustrates these two options.}

\begin{Ex}\label{ex:fairness_delayed}
{Consider a fair allocation problem where $\baseSet = \{x\in\Z^2_{\ge 0}: x_1+x_2 = 1\}$. Further consider $\benefit$ defined as
\begin{align*}
    [\benefit(x)]_1 \quad&=\quad x_1 + \frac{15}{37} x_2\\
    [\benefit(x)]_2 \quad&=\quad x_2 + \frac{15}{47} x_1.
\end{align*}
Perfect fairness can be achieved when $T=1109$, but if we fix $T=5$, the difference between the benefits of both stakeholders will be $\sim13\%$, and if we allow a small difference of $0.1\%$ between their benefits, this could be achieved when $T=15$.}
\end{Ex}

{Another compromise would be to choose a fair way of reaching the solution. Given that we know the value of~$T^{\star}$ which grants optimal fairness, we could then identify a \emph{best path} leading to the optimal solution. A best path is the one which ensures that unfairness is kept as low as possible in the intermediate time points, without sacrificing optimality at the end of the time horizon. This would, however, require solving another linear program, which may prove burdensome if the time horizon were too large.}

{
One may think that greedily aiming for the fairest solution at each time point---while considering any unfairness incurred in previous time points, and enforcing minimal efficiency constraints by ensuring that all available resources are used at each time point---could lead to optimality at the end of the time horizon. \cref{thm:greedy} shows that it is not the case.}
\begin{theorem}\label{thm:greedy}
 {A greedy approach does not provide any guarantee of optimality for the $T$-PFA problem.}

\begin{proof}
{Consider a fair allocation problem where $\baseSet = \{x\in\Z^3_{\ge 0}:  x_1+x_2+x_3 = 1\}$. Further consider $\benefit$ defined as
\begin{align*}
    [\benefit(x)]_1 \quad&=\quad \epsilon x_1 + \frac{1}{2}x_2 + \frac{1}{2}x_3 \\
    [\benefit(x)]_2 \quad&=\quad x_2 \\
    [\benefit(x)]_3 \quad&=\quad x_3.
\end{align*}}

\noindent where $0 < \epsilon < \frac{1}{2}$.
{If $T=2$, the only perfectly fair solution~(and, as it happens, the most efficient too) is achieved by allocating the resource in turn to $x_2$ and $x_3$, over $T$. Yet, for any reasonable measure of unfairness, the greedy choice would be to allocate the resource twice to $x_1$. For instance, suppose that $\unfair$ was defined by the difference between the largest and smallest benefits, i.e., $\unfair(x) = \max_{i \in \{1, ...,n\}}[\benefit(x)]_i-\min_{i \in \{1, ...,n\}}[\benefit(x)]_i$. In such a case, the greedy choice could never deviate from a continuous allocation to $x_1$, as any other allocation would always be less fair---here, the solution would always be $T\epsilon-0=T\epsilon$.}  {Thus, a greedy approach does not necessarily lead to an optimal solution.}\hfill $\blacksquare$ 
\end{proof}
\end{theorem}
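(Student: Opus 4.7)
My plan is to establish this non-guarantee result by exhibiting a small counterexample. Since the statement claims that greedy need not be optimal, a single concrete instance where the greedy rule provably fails suffices. I would work with $n=3$ stakeholders and the simplest possible $\baseSet$ whose elements are the three unit vectors of $\Z^3_{\ge 0}$, so that each single-period allocation sends the full resource to exactly one of three options. This keeps the feasible per-period choices to just three, making both the greedy dynamics and the optimal $T$-PFA solution easy to inspect.

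The key design idea is to introduce one "decoy" allocation $x_1$ whose benefit vector is nearly symmetric across all stakeholders (giving a small benefit to stakeholders 2 and 3), and two "extremal" allocations $x_2$ and $x_3$ that each deliver full benefit only to stakeholders 2 and 3 respectively, while also granting stakeholder 1 partial credit. I would set up the benefit function so that: (i) picking $x_1$ once gives a nearly fair single-round benefit pattern, but (ii) repeatedly picking $x_1$ never equalizes benefits because stakeholder 1's marginal gain per round is tuned (via some small $\varepsilon$) to be incommensurable with the others. By contrast, choosing $x_2$ then $x_3$ (or vice versa) averages to a perfectly balanced benefit across all three stakeholders over $T=2$ rounds.

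The main step is then a direct comparison: for any standard measure of unfairness satisfying \cref{Def:Unfair} (for instance the max--min spread used in the proposed proof), I would compute $\unfair$ of the cumulative benefit after one round for each of the three candidate first moves and show that $x_1$ strictly minimizes it. A greedy rule must then commit to $x_1$ at step 1, and by the same calculation carried over with the updated cumulative benefit, it will commit to $x_1$ again at step 2. The resulting cumulative benefit is strictly unfair, whereas the alternating schedule $(x_2,x_3)$ produces a strictly fairer outcome, contradicting optimality of the greedy path.

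The main obstacle I foresee is ensuring that the counterexample is robust to the choice of unfairness measure, since the theorem statement is phrased in general terms. I would handle this by tuning the parameter $\varepsilon$ so that stakeholder 1's cumulative benefit under the greedy schedule differs strictly from those of stakeholders 2 and 3, which forces $\unfair > 0$ for every function satisfying property (1) of \cref{Def:Unfair}; the alternating schedule equalizes all three cumulative benefits exactly, forcing $\unfair = 0$ for every such function. The proof then concludes with "$0 < \unfair_{\text{greedy}}$" as the required gap, and a parenthetical remark that for the explicit max--min measure the gap evaluates to $T\varepsilon$.
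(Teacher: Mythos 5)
Your proposal is correct and follows essentially the same route as the paper: a three-stakeholder instance with $\baseSet$ the unit vectors, a ``decoy'' option whose single-round spread is a small $\epsilon$ so that greedy (under the max--min spread) locks onto it every round, versus an alternating schedule on the other two options that equalizes cumulative benefits exactly and hence is perfectly fair for any $\unfair$ satisfying \cref{Def:Unfair}. The only difference is cosmetic (where exactly the $\epsilon$-benefit is placed in the decoy's benefit vector), not a difference in argument.
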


{We should note that there is a trade-off between fairness and efficiency: As illustrated in \cref{ex:fair_eff}, enforcing constraints to increase fairness will generally decrease efficiency, and vice-versa.}

\begin{Ex}\label{ex:fair_eff}
{Consider a SPFA problem where $\baseSet = \{x\in\Z^3_{\ge 0}: 1 \le x_1+x_2+x_3 \le 3\}$. Further consider $\benefit$ defined as
\begin{align*}
    [\benefit(x)]_1 \quad&=\quad x_1 + \frac{1}{2} x_2 + \frac{1}{2} x_3\\
    [\benefit(x)]_2 \quad&=\quad x_2 + \frac{1}{2} x_1\\
    [\benefit(x)]_3 \quad&=\quad x_3 + \frac{1}{2} x_1.
\end{align*}
By enforcing $\ineff(x) = 0$, the only feasible solution is $x=(3, 0, 0)$ which gives $\benefit(x)=(3, \frac{3}{2}, \frac{3}{2})$ which is not perfectly fair. In contrast, by necessitating perfect fairness, the only feasible solution is $x=(0, 1, 1)$ which gives $\benefit(x)=(1,1,1)$. This corresponds to $\ineff(x) = \frac{6-3}{6-(3/2)}=\frac{2}{3}$.}
\end{Ex}

{We should also note that the value of $T$ may have a noticeable impact on both fairness and efficiency, as illustrated in \cref{ex:t}.}

\begin{Ex}\label{ex:t}
{Consider a fair allocation problem where $\baseSet = \{x\in\Z^4_{\ge 0}: 1 \le x_1+x_2+x_3+x_4 \le 3\}$. Further consider $\benefit$ defined as
\begin{align*}
    [\benefit(x)]_1 \quad&=\quad x_1 + x_2\\
    [\benefit(x)]_2 \quad&=\quad x_2 + x_1\\
    [\benefit(x)]_3 \quad&=\quad x_3 + x_4\\
    [\benefit(x)]_4 \quad&=\quad x_4 + x_3.
\end{align*}
The smallest $T$ accommodating perfect fairness is $T=1$ with $x=(1,0,1,0)$\footnote{Or any other equivalent solution.} for $\benefit(x)=(1,1,1,1)$ and an inefficiency of $\eta(x)=\frac{6-4}{6-2}=0.5$~(with one wasted resource).
However, $T=2$ also accommodates solutions with perfect fairness, albeit with an increased efficiency. Take, for instance, $x(1)=(3,0,0,0)$ and $x(2)=(0,0,0,3)$ over two time points, for a combined $\benefit(x(1)) + \benefit(x(2))=(3,3,3,3)$ and an inefficiency of $\eta(x(t))=0$ in {\em each} period $t$. This represents a noticeable increase in efficiency over choosing the initial solution twice to cover the same time horizon.}
\end{Ex}

In other words, reaching perfect fairness in the shortest possible horizon might not lead to the most efficient solutions.

\section{Ambulance Location and Relocation}
\label{sec:alp}
The task of allocating ambulances to a set of bases in a region is a well-known problem in operations research \citep{Brotcorne2003}. The objective of this problem is generally one of efficiency: Ambulances should be allocated to prime spots such that they can quickly reach the maximum number of people.

{The definition of this problem is not unified across the literature~\citep{Gendreau2001, Brotcorne2003}.\footnote{This is due to the fact that research projects often work with datasets associated with specific regions, each of which must follow local guidelines and regulations.} Most definitions, however, agree that the population should receive an efficient service, which is generally translated into some form of coverage. In this paper, we are not solely concerned by efficiency, but we are further interested in fairness: Ambulances should be allocated such that the same set of people is not always at a disadvantage with respect to access to a quick service.}

\subsection{{Preliminary Problem Formulation}}
In this manuscript, we adopt the following definition. The region to which ambulances are allocated is divided into~$n$ demand zones, the travel time between each pair being a known quantity. Ambulances can only be allocated to bases, which are located within a subset of the zones. Each zone (equivalently, the people living in each zone) are individual stakeholders in this model. A pre-chosen $T$ rounds of decision is modeled to occur, over which the ambulances should be allocated in a fair and efficient manner. In each round, a configuration~$x$ of how $m$ ambulances are placed is chosen. 
Next, we define the benefit function for the zones. 
In this model, each zone $i$, based on its population, has a demand of $\zeta_i$ ambulance vehicles. 
A zone $i$ is said to be \emph{covered} in configuration $x$ if there are at least $\zeta_i$ ambulances located in bases from which zone $i$ could be reached in a time less than a chosen time limit called the {\em response threshold time}. 
Now, $[\benefit(x)]_i = 1$ if zone $i$ is covered by the configuration $x$ and $[\benefit(x)]_i = 0$ otherwise. We note that we use a nonlinear benefit function in this case, as opposed to the simpler cases in \cref{sec:simple} to both reflect the fact that a larger number of ambulances are essential to sufficiently serve certain regions and also to demonstrate the robustness of our methods to even certain classes of nonlinear (piecewise linear) benefit functions. The nonlinearity, as shown below, can be modeled using auxiliary variables and linear functions, to conform to the form of \cref{eq:finiteBaseSet1}.
Furthermore, efficiency constraints analogous to \cref{eq:FOT:eff} are enforced by stating that at least a fraction $f$ of the zones should be covered in each allocation. The unfairness metric $\unfair$ used is the difference between the two zones which are most often and least often covered, over  $T$, i.e., $\phi(\tau):=\min_{g,h} \{g-h: g\ge \tau_i \ge h, \forall\,i\in \{1,\ldots,n\} \}$.

With the above, the problem is a standard T-PFA problem in \cref{Def:TPFA}. However, the ambulance allocation problem involves additional constraints on allocations between two consecutive periods. Decision-makers prefer policies that ensure that not too many ambulances have to shift bases on a daily basis. Thus, between two consecutive allocations, we would ideally like to have not more than a fixed number $r$ of ambulances to shift bases. We refer to this constraint as the {\em transition constraint}.

\begin{table}[h!]
\centering
    {\begin{tabular}{rl}
       \hline
       \multicolumn{2}{c}{Variables} \\
       \hline
       $\unfair(y)$ & Unfairness associated with benefits $y$ \\
       $y_i$ & Average benefit of zone $i$ \\
       $x_i(t)$ & Number of ambulances at base $i$ on time $t$ \\
       $v_i(t)$ & Number of ambulances that can reach zone $i$ at time $t$\\
       $\benefit_{i}(t)$ & Benefit of zone $i$ at time $t$ \\
       \hline
       \hline
       \multicolumn{2}{c}{Parameters} \\
       \hline
       $\mathcal{B}$ & Set of bases \\
       $\mathcal{T}$ & Set of time points \\
       $n$ & Number of zones \\
       $m$ & Number of ambulances \\
       $a_{ji}$ & 1 if zones $i$ and $j$ are connected, 0 otherwise \\
       $\zeta_i$ & Demand of ambulances for zone $i$ \\
       $T$ & Time horizon \\
       $f$ & Fraction of zones that must be covered \\
       $r$ & Number of ambulances allowed to shift bases \\
       \hline
     \end{tabular}}
\caption{Variables and parameters of the AWT.}\label{tbl:Amb} \end{table}

Now, the problem can be formally cast as follows:
\begin{subequations}
\begin{alignat}{3}
         \min_{y,v,x,\benefit,\unfair}\quad& \unfair(y) && \label{eq:Amb:begin}    \\
        \textrm{s.t.}\quad
        &x_i(t) = 0 &&\forall\, i \not \in \mathcal{B};\, \forall\,t\in\mathscr{T}\label{eq:Amb:Base}\\
        &\sum_{i=1}^n x_i(t) \le m &&\forall\,t\in\mathscr{T} \label{eq:Amb:m}\\
        &v_i(t) = \sum_{j=1}^na_{ji}x_j(t)&&\forall\,i=1,\ldots,n;\,t\in\mathscr{T}\label{eq:Amb:v}\\
        &v_i(t) \leq (\zeta_i-1) + m\benefit_i(t)&&\forall\,i=1,\ldots,n;\,t\in\mathscr{T} \label{eq:Amb:vzeta1}\\
        &v_i(t) \geq \zeta_i - m(1-\benefit_i(t))&&\forall\,i=1,\ldots,n;\,t\in\mathscr{T}\label{eq:Amb:vzeta2}\\
        &y_i  =\frac{1}{T} \sum_{t=1}^T \benefit_i(t) &&\forall\,i=1,\ldots,n\\
        &\sum_{i=1}^{n}\benefit_i(t) \ge fn &&\forall\,t\in\mathscr{T} \label{eq:Amb:eff}\\
        &x_i(t) \in \mathbb{Z}_{\ge 0}&&\forall\,i=1,\ldots,n;\,t\in\mathscr{T} \label{eq:Amb:integer} \\
        &\benefit_i(t) \in \B&&\forall\,i=1,\ldots,n;\,t\in\mathscr{T} \label{eq:Amb:end}  \\
        &\sum_{i=1}^n\left\vert x_i(t+1) - x_i(t)\right \vert \le 2r&&\forall\,t\in1,\ldots,T-1 \label{eq:Amb:trans}
\end{alignat}\label{eq:Amb}
\noindent Here, $\mathscr {T} = \{1,\ldots,T\}$, $x_i(t)$ is the number of ambulances allotted to zone $i$ at time $t$. 
Constraints \cref{eq:Amb:Base} ensure that allocation occurs only if $i$ is an ambulance base. Here, $\mathcal B$ is the set of ambulance bases. Constraints \cref{eq:Amb:m} limit the number of ambulances available for allocation in each round. Moreover, $a_{ij}$ is a binary parameter which is $1$ if an ambulance can go from $i$ to $j$ within the response threshold time and $0$ otherwise.
 Through constraints \cref{eq:Amb:v}, $v_i(t)$ counts the number of ambulances that can reach zone $i$ within the response threshold time. 
 Here, 
$\benefit_i(t)$ is $1$ if $v_i(t)$ is at least $\zeta_i$, i.e., if the demand of ambulances in zone $i$ is satisfied, and $0$ otherwise. This is accomplished by constraints \cref{eq:Amb:vzeta1,eq:Amb:vzeta2}. 
Constraints \cref{eq:Amb:eff} take care of efficiency and ensure only allocations that cover at least a fraction $f$ of all zones are to be considered. 
Finally, constraints \cref{eq:Amb:trans} are the transition constraints, which can easily be reformulated through linear inequalities. 
It ensures that not too many ambulances shift bases between consecutive time periods. 
\end{subequations}

The problem in \cref{eq:Amb} is a mixed-integer linear program (MILP). However, the problem is symmetric with respect to certain permutations of the variables. Symmetry makes it particularly hard for modern branch-and-bound-based solvers \citep{margot2010symmetry}. Even if one relaxes the transition constraints \cref{eq:Amb:trans}, the symmetry exists.
One can observe that, relaxing the transition constraints \cref{eq:Amb:trans} in \cref{eq:Amb}, we have a T-PFA problem. 
We call this relaxed problem defined by \cref{eq:Amb:begin} to \cref{eq:Amb:end} as the Ambulance-without-transition constraints (AWT) problem. 

\subsection{{A Branch-and-Price Reformulation of the AWT}}
Since the AWT in \cref{eq:Amb:begin} to \cref{eq:Amb:end} is a T-PFA problem, it can be readily written in the form shown in \cref{eq:finiteBaseSet1} and hence can be solved using branch-and-price. 
First, we note that without the constraint in \cref{eq:Amb:trans}, any feasible solution or optimal solution remains feasible or optimal after permutations to $t$. Thus, the following version of the problem could be used to solve the relaxed problem without the symmetry. Here, we only {\em count} the number of times each configuration might be used over $T$ periods.

\begin{table}[h!]
\centering
    {\begin{tabular}{rlrl}
       \hline
       \multicolumn{2}{c}{Variables} & \multicolumn{2}{c}{Parameters} \\
       \hline
       $g$ & Largest average benefit & $\benefit_{ij}$ & Benefit of zone $i$ in configuration $j$ \\
       $h$ & Smallest average benefit & $T$ & Time horizon \\
       $y_i$ & Average benefit of zone $i$ & $n$ & Number of zones \\
       $q_j$ & Number of times configuration $j$ is used & $k$ & Number of configurations \\
       \hline
     \end{tabular}}
\caption{Variables and parameters of the MP.}\label{tbl:mp} \end{table}

\paragraph{{The Master Problem} (MP). }
\begin{subequations}
\begin{alignat}{30}
    \min \quad& g-h &&\label{eq:BAPprimal_obj}\\
    \textrm{s.t.} \quad 
    &g \ge y_i&&\quad(\alpha_i)&&\quad \forall \,i = 1, \dots, n\label{eq:BAPprimal_g}\\
    &y_i \ge h&&\quad(\beta_i)&&\quad \forall \,i = 1, \dots, n\label{eq:BAPprimal_h}\\
    &y_i = \frac{1}{T} \sum_{j=1}^k\benefit_{ij}q_j&&\quad(\lambda_i)&&\quad \forall \,i = 1, \dots, n\\
    &\sum_{j=1}^k q_j = T&&\quad(\mu) &&\label{eq:BAPprimal_cstr2}\\
    &q_j \ge 0&&&&\quad \forall \,j = 1, \dots, k\\
    &q_j \in  \Z&&&&\quad \forall \,j = 1, \dots, k.
\end{alignat}
\label{eq:BAPprimal}
\end{subequations}

\noindent where $q_j$ counts the number of times the configuration defined by $x^j$ is used. The benefit obtained by stakeholder $i$ due to the configuration $x^j$ is $\benefit_{ij}$. $y_i$ is the average benefit that stakeholder $i$ enjoys through the time horizon of planning. The objective \cref{eq:BAPprimal_obj} is to minimize the difference between the largest ($g$) and the smallest ($h$) average benefits.
Note that, once we solve the MP, an equivalent solution to the AWT could be obtained by arbitrarily considering the allocations $x^j$ for $q_j$ times in $x(1),\ldots,x(T)$ of the AWT. Similarly, given a solution to the AWT, one could immediately identify a corresponding solution to the MP.

However, since the number of configurations are typically exponentially large in $n$ and $m$, and we only might use a handful of them in a solution, we could resort to a branch-and-price approach where the MP only contains a subset of the configurations.

Referring to the continuous relaxation of MP as CMP, the the dual of CMP can be found in Appendix A.

Considering only a subset of columns in the CMP is equivalent to considering only a subset of constraints in \cref{eq:BAPdual:many}. 
Given some dual optimal solution $(\alpha^\star,\beta^\star,\lambda^\star,\mu^\star)$ to the dual of the restricted CMP, one can find the most violated constraint in \cref{eq:BAPdual:many} and include the corresponding column in the restricted CMP. 

\paragraph{{The Pricing Problem}. }
\begin{subequations}
\begin{alignat}{3}
\min \quad          & \sum\limits_{i=1}^{n} \benefit_i \lambda_i^\star                         &&                    \label{eq:pp:obj}       \\
\textrm{s.t.} \quad 
& (\benefit_{i} = 1) \Longleftrightarrow  (a_{i}^{\mathsf T} x \geq \zeta_{i}), && \quad \forall \,i = 1, \dots, n \label{eq:pp:c1} \\
& (\benefit_{i} = 0) \Longleftrightarrow  (a_{i}^{\mathsf T} x \leq \zeta_{i}-1), && \quad \forall \,i = 1, \dots, n \label{eq:pp:c0} \\
                  & \sum\limits_{i=1}^n x_i \leq m, && \quad \forall \,i \in \mathcal{B} \label{eq:pp:m} \\
                  & \sum\limits_{i=1}^n \benefit_i \geq fn && && \label{eq:pp:95} \\
                  & \benefit \in \B^n &&  && \\
                  & x \in \mathbb{Z}_{\geq 0}^n. &&  &&
\end{alignat}
\label{eq:BAPpricing}
\end{subequations}
The minimal efficiency constraints are embedded in the pricing problem. 
 The time horizon is fixed in \cref{eq:BAPprimal_cstr2}.

Constraints \cref{eq:pp:c1,eq:pp:c0} help compute the benefits to each zone, $\tau$ and can clearly be rewritten with integer variables and linear constraints. No more than $m$ ambulances may be used \cref{eq:pp:m}, and the configuration must cover at least a fraction $f$  of the zones \cref{eq:pp:95}.

The MP reformulation of the AWT in \cref{eq:Amb:begin} - \cref{eq:Amb:end}, can now be solved very efficiently using branch-and-price. However, a solution thus obtained might violate \cref{eq:Amb:trans}. Further, given the solution in the space of variables in the MP it is not immediate if one can easily verify whether the constraint \cref{eq:Amb:trans} is or is not satisfied. So, given a feasible point for the MP, we define the configuration graph as follows, and then show that the point satisfies  \cref{eq:Amb:trans} if and only if the configuration graph has a Hamiltonian path. 
\subsection{Checking \cref{eq:Amb:trans}}
In the previous section, we proposed a branch-and-price to solve the relaxed version of the AWT without including constraints \cref{eq:Amb:trans}. First, we provide an algorithm to check the feasibility of a solution provided by branch-and-price in \cref{thm:HamPath}. If it is feasible, then we are trivially done. If the solution is infeasible, we provide routines to ``cut-off'' such an infeasible solution and continue the branch-and-price algorithm. Meanwhile, we also show how using cutting planes could be impractical in the space of solutions considered in the MP or a binarized version of the problem. Then, we talk about a three-way branching scheme that could work. However, we finally resort to constraint programming, due to the limitations of commercial solvers in implementing three-way branching. 

\begin{Def}[Configuration graph]
Given a solution $\bar q$ to the MP, define $\mathcal I = \{j:\bar q_j \geq 1\}$. 
The configuration graph (CG) is defined as $G = (V, E)$, where $V = \{ (j,j'): j \in \mathcal I; j' \in \{1,2,\ldots,\bar q_j\}\}$ and $E = \{( (j_1, j_1'), (j_2, j_2') ):  \left \Vert \bar x^{j_1} - \bar x^{j_2} \right \Vert_1 \leq 2r\}$ where $\bar x^{j_1}$ and $\bar x^{j_2}$ are the configurations corresponding to the variables $\bar q_{j_1}$ and $\bar q_{j_2}$.
\end{Def}
\begin{theorem}\label{thm:HamPath}
Given a {point} $\bar q$ that is feasible to the MP, there exists a corresponding $\bar x$ that is feasible to the AWT if the CG defined by $\bar q$ contains a Hamiltonian path.
Conversely, if there is a feasible solution to the AWT which uses only the configurations with indices in  $\mathcal I = \{j:\bar q_j \geq 1\}$, then the corresponding CG has a Hamiltonian path. 
\end{theorem}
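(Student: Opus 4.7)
The plan is to exhibit an explicit correspondence between AWT-feasible time-indexed sequences $\bar x(1),\ldots,\bar x(T)$ that use each configuration $x^j$ exactly $\bar q_j$ times, and Hamiltonian paths in the configuration graph. The essential observation is that $V$ was deliberately designed so that $|V|=\sum_j \bar q_j = T$, with $\bar q_j$ distinguishable copies $(j,1),\ldots,(j,\bar q_j)$ of each reused configuration. Under this setup, an ordered traversal of $V$ and an assignment of configurations to the time slots $1,\ldots,T$ carry the same combinatorial information, and the edge predicate $\bigl\|\bar x^{j_1}-\bar x^{j_2}\bigr\|_1\le 2r$ defining $E$ is literally the per-step transition constraint~\cref{eq:Amb:trans}.

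For the ``if'' direction, I would read a Hamiltonian path as a sequence $(j_1,j_1'),\ldots,(j_T,j_T')$ and set $\bar x(t):=\bar x^{j_t}$. Since each vertex is visited exactly once, configuration $j$ appears exactly $\bar q_j$ times, so the $y_i$-averages produced by $\bar x$ match those produced by $\bar q$. Every configuration supplied by the pricing problem~\cref{eq:BAPpricing} already satisfies the time-wise constraints \cref{eq:Amb:Base,eq:Amb:m,eq:Amb:v,eq:Amb:vzeta1,eq:Amb:vzeta2,eq:Amb:eff,eq:Amb:integer,eq:Amb:end}, so these are inherited pointwise by $\bar x(t)$. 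The only remaining AWT constraint, \cref{eq:Amb:trans}, reduces on every consecutive pair to the edge condition of $E$, which holds along the Hamiltonian path by definition. For the converse, I would take the given AWT-feasible $\bar x(1),\ldots,\bar x(T)$ that uses configurations only from $\mathcal I$ with multiplicities $\bar q_j$, define $j(t)$ so that $\bar x(t)=\bar x^{j(t)}$, and let $j'(t)$ be the running ordinal of the occurrence $t$ within $\{s\le t:j(s)=j(t)\}$. The map $t\mapsto (j(t),j'(t))$ is then a bijection onto $V$, and each consecutive pair is in $E$ precisely because~\cref{eq:Amb:trans} holds between $\bar x(t)$ and $\bar x(t+1)$; this is a Hamiltonian path.

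The main obstacle, and the reason the configuration graph is defined with parallel vertex copies rather than simple nodes $\{j:\bar q_j\ge 1\}$, is the bookkeeping for repeated configurations: a single-vertex-per-configuration graph would need a walk of prescribed multiplicity rather than a Hamiltonian path, which is harder to verify and to branch on. By inflating each index $j$ to $\bar q_j$ copies, the multiplicity bookkeeping is absorbed into the vertex set and the feasibility question collapses to a clean Hamiltonicity test. Once this indexing scheme and the matching-of-multiplicities assumption are pinned down, no further algebra is required: the correspondence proof is purely a verification that each AWT constraint either lives inside a single configuration (hence inherited) or couples two consecutive configurations (hence encoded exactly by $E$).
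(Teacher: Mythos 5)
Your proposal is correct and follows essentially the same route as the paper: both directions are handled by the same bijection between time slots and the inflated vertex set $(j,1),\ldots,(j,\bar q_j)$, with the edge predicate encoding the transition constraint and the running-ordinal trick used for the converse. Your write-up is slightly more explicit than the paper's in checking that the per-period constraints and the multiplicities $\bar q_j$ are preserved, but this is added detail rather than a different argument.
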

\begin{proof}
Observe that $G$ has exactly $T$ vertices, since if a configuration is found more than once in $\bar q$, it is split into as many distinct vertices in $V$, which are distinguished by the second element of the tuple. An edge $((u_1,u_2), (v_1,v_2))$ in $E$ indicates that movement between configurations indexed by $u_1$ and $v_1$ does not violate the transition constraints. A Hamiltonian path is a path that visits each vertex exactly once. As such, any Hamiltonian path in $G$ proves that a feasible sequence of transitions which does not violate the transition constraints exists between the configurations in $\bar q$. Given a Hamiltonian path $(v_1, w_1), (v_2, w_2), \dots, (v_T, w_T)$ in $G$, a feasible sequence $x(1), x(2), \dots, x(T)$ for the AWT would be $\bar x^{v_1},\bar  x^{v_2}, \dots,\bar  x^{v_T}$. 

{Conversely, given a feasible sequence $x^{j_1}, x^{j_2}, \dots, x^{j_T}$ for the AWT, a Hamiltonian path can be constructed for $G$ as follows. The $t$-th vertex of the path is ${(j_t, \beta)}$ where 
$\beta = 1 + $ the number of times the configuration denoted by $x^{j_t}$ has appeared in the first $t-1$ vertices of the path. } 
\hfill $\blacksquare$
\end{proof}

Following \cref{thm:HamPath}, one can construct the CG with exactly $T$ vertices and can check if the solution to the MP is feasible to the AWT. If yes, we are done. If not, the way we can proceed is detailed in the rest of this section. 

\subsubsection{{Cutting Planes and Binarization}}

The most natural way to eliminate a solution that does not satisfy a constraint is by adding a cutting plane. This is a common practice in the MILP literature. 

In cases where a more sophisticated cutting plane is not available, but every feasible solution is determined by a binary vector, no-good cuts could be used to eliminate infeasible solutions one by one \citep{dAmbrosio2010interval}. 
However, in our problem of interest, the variables $x$ in the MP are general integer variables. 
It is possible that a point $x$ that violates the transition constraint could lie strictly in the convex hull of solutions that satisfy the transition constraint. Hence it could be impossible to separate $x$ using a cutting plane. \cref{ex:confGr} demonstrates the above phenomenon. 

\begin{Ex}\label{ex:confGr}
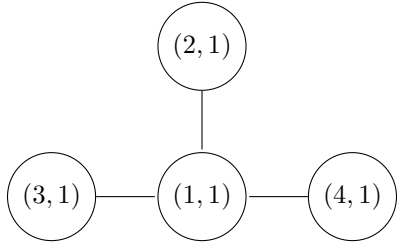
\begin{figure}[t]
    \centering
        \begin{tikzpicture}[shorten >=1pt,node distance=2cm,auto]
      \tikzstyle{state}=[shape=circle,draw,minimum size=.5cm]
      \node[state] (C) {$(1, 1)$};
      \node[state,above of=C] (A) {$(2, 1)$};
      \node[state,left of=C] (D) {$(3, 1)$};
      \node[state,right of=C] (E) {$(4,1)$};
      \path[draw]
      (A) edge node {} (C)
      (D) edge node {} (C)
      (E) edge node {} (C);
    \end{tikzpicture}
    \caption{Configuration graph for the solution in \cref{ex:confGr}.}
    \label{fig:confGr}
\end{figure}

Consider the case where the MP has an optimal solution given by $q = (q_1, q_2, q_3, q_4) = (1,1,1,1)$ and the allocations $x_1,x_2,x_3,x_4$ (in the context of the AWT) corresponding to $q_1, q_2, q_3, q_4$ are $(3,3,3,3)$, $(4,2,3,3)$, $(2,4,3,3)$, $(3,3,2,4)$ respectively. Let $r = 1$. Then, the CG corresponding to this solution is a tree as shown in \cref{fig:confGr} and hence does not have a Hamiltonian path. On the other hand, for the choices $q^i$, $q^{ii}$, $q^{iii}$ and $q^{iv}$ being $(4,0,0,0)$, $(0,4,0,0)$, $(0,0,4,0)$, $(0,0,0,4)$,  the CGs are all $K_4$, i.e., complete graphs and hence have a Hamiltonian path, trivially. 

Now, it is easy to see that $q$ lies in the convex hull of $q^i$, $q^{ii}$, $q^{iii}$ and $q^{iv}$ and while each of the latter solutions satisfies the transition constraint, $q$ does not. Thus no valid cutting plane can cut only the infeasible point. 
\end{Ex}

While \cref{ex:confGr} shows that an infeasible solution cannot always be separated using cutting planes, it could still be possible that there is an extended formulation where the infeasible point could be separated. 

\paragraph{{Naive Binarization}.} A natural choice of an extended formulation comes from binarizing each integer variable $q_j$ in the MP. This is possible because each $q_j$ is bounded above by $T$ and below by $0$. Thus one can write constraints of the form $q_j = b_j^1 + 2b_j^2 + 4b_j^4 + 8b_j^{8} + \ldots$ where the summation extends up to the smallest $\ell$ such that $2^\ell > T$, i.e., $\ell = \ceil{\log_2 T}$. If each $b_j^\ell$ is binary, any integer between $0$ and $T$ can be represented as above. 
Having written the above binarization scheme, one can separate any solution $q$ by adding a  no-good cut on the corresponding binary variables. 
A no-good cut is a linear inequality that separates a single vertex of the $0-1$ hypercube without cutting off the rest \citep{balas1972canonical,dAmbrosio2010interval}.
An example is shown in \cref{ex:confGrBin}.

\begin{Ex}\label{ex:confGrBin}
Consider the problem in \cref{ex:confGr}. Binarization to separate the solution $q = (1,1,1,1)$ can be done by adding the following constraints to the MP.
\begin{subequations}
\begin{align}
    q_j \quad&=\quad b_j^1 + 2b_j^2 + 4b_j^4 &\forall\,j=1,\ldots,4\\
    \sum_{j=1}^4\left (b_j^1 + (1-b_j^2) + (1-b_j^4) \right) \quad&\le\quad {3}
    \end{align}
\end{subequations}

Note that the second constraint above (the no-good constraint) is violated {\em only} by the binarization corresponding to the solution $u = (1,1,1,1)$ and no other feasible solution is cut off. 
\end{Ex}

The potential downside with the above scheme is that one might have to cut off a prohibitively large number of solutions before reaching the optimal solution. And with the column generation introducing new $q$-variables, this could lead to an explosion in the number of new variables as well as the number of new constraints.  

\paragraph{{Strengthened Binarization}.} When the CG corresponding to $q$ is not just lacking a Hamiltonian path, but is a disconnected graph (a stronger property holds), one could add a stronger cut, which could potentially cut off multiple infeasible solutions. In this procedure, we add a binary variable $b_j$ for each $q_j$ such that $b_j = 1$ if and only if $q_j \geq 1$. Now, observe that if a CG corresponding to the solution $\bar q$ is disconnected, then it will be disconnected for all $q$ whose non-zero components coincide with the non-zero components of $\bar q$. i.e., no solution with the same support as that of $\bar q$ could satisfy the transition constraints. Thus one could add a no-good cut on these $b_j$ binary variables, which cuts off all the solutions with the same support as $\bar q$. 

Unlike naive binarization, while this cuts off multiple solutions simultaneously, it could happen that the CG is connected, but just does not have a Hamiltonian path. In such  a case, no cut could be added by the strengthened binarization scheme, and one might have to resort to the naive version. 

\subsubsection{{Three-Way Branching}}
An alternative to the naive binarization scheme is three-way branching. While this could work as a stand-alone method, it could also go hand in hand with the strengthened binarization mentioned earlier. This method takes advantage of the three-way branching feature that some solvers, for example, SCIP \citep{GamrathEtal2020ZR,GamrathEtal2020OO}, have. 

In this method, as soon as a solution $\bar q$ satisfying all the integrality constraints but violating the transition constraint is found, the following actions are performed. First, if the lower bound and the upper bound for every component of $\bar q$ match, then we are at a leaf that can be discarded as infeasible. If not, find a component $j$ (in our case, a configuration $j$) such that $\bar q_j$ is strictly different from at least one of the bounds. Now, we do a three-way branching on the variable $q_j$ where the new constraints in each of the three branches are (i) $q_j \leq \bar q_j - 1$ (ii) $q_j = \bar q_j$ (iii) $q_j \geq \bar q_j + 1$. 

Finite termination follows from the fact that $\bar q_j$ is infeasible for branches (i) and (iii) and hence will never be visited again. For branch (ii), we have $q_j$ such that its lower and upper bounds are equal to $\bar q_j$. Thus, we have one more fixed variable and this variable will never be branched on again.

Three-way branching is used as opposed to regular two-way branching but on integer variables because of the following reasons. First, branching with (i) $q_j \leq \bar q_j - 1$  (ii) $q_j \geq \bar q_j + 1$ is invalid, as it could potentially cut off other feasible solutions with $q_j = \bar q_j$ but the solution differing in components other than $j$. Branching with (i) $q_j \leq \bar q_j - 1$ (ii) $q_j \geq \bar q_j$ could cycle, as we have not fixed any additional variable in the second branch, nor have we eliminated the infeasible solution. Thus, the LP optimum in the second branch is going to be $\bar q$ again, and cycling ensues.

\subsubsection{Constraint Programming}

The final alternative we can use to enforce transition constraints \cref{eq:Amb:trans} is constraint programming~(CP). Namely, CP is a programming paradigm for solving combinatorial problems. A CP model is defined by a set of variables, each of which is allowed values from a finite set, its \emph{domain}. The relationship between these variables is determined by the constraints of the problem. These succinct constraints can encapsulate complex combinatorial structures, such as the packing of items into bins, for example. A solver then solves the problem by enforcing consistency between the variables and the constraints, and using branching and backtracking techniques to explore the solution space. Before defining the CP model to enforce constraints \cref{eq:Amb:trans}, we define the compact configuration graph, and explain its relationship with the CG.
\begin{Def}[Compact configuration graph]
\label{def:CCG} Given a feasible solution $ q^\star$ to the continuous relaxation of the MP (CMP), the compact configuration graph (CCG) is defined as $G = (V, E)$, where $V = \indexSeta:=\{j: q^\star_j > 0\}$ and {$E = \{( v, w ):  \left \Vert  \bar x^{v} -  \bar x^{w} \right \Vert_1 \leq 2r, \forall v, w \in \indexSeta\}$}.
\end{Def}
\begin{Def}[Walk]
A \emph{walk} in an undirected graph $G=(V,E)$ is a finite sequence of vertices $v_1, \ldots, v_k$, not necessarily distinct, such that for each $i$, $(v_i, v_{i+1}) \in E$.
\end{Def}

\begin{theorem}\label{thm:walk}
Every walk of length $T$ in a CCG constructed from a solution $q^\star$ to CMP corresponds to a feasible solution of the AWT.
\end{theorem}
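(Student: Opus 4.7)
The plan is to show, given a walk $v_1, v_2, \ldots, v_T$ of length $T$ in the CCG, that setting $x(t) := \bar{x}^{v_t}$ for $t = 1, \ldots, T$ yields an assignment that satisfies every constraint of the AWT together with the transition constraints \cref{eq:Amb:trans}. The argument splits naturally into \textbf{per-period feasibility} (constraints \cref{eq:Amb:Base}--\cref{eq:Amb:end}) and \textbf{cross-period feasibility} (the transition constraints).

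For per-period feasibility, I would exploit the fact that each vertex $v_t$ belongs to $\indexSeta = \{j : q^\star_j > 0\}$, and hence the corresponding configuration $\bar{x}^{v_t}$ entered the CMP as a column produced by the pricing problem \cref{eq:BAPpricing}. Any such column, by construction, satisfies the base-restriction \cref{eq:Amb:Base}, the ambulance-count bound \cref{eq:Amb:m}, the coverage definitions \cref{eq:Amb:v}--\cref{eq:Amb:vzeta2}, the efficiency requirement \cref{eq:Amb:eff}, and the integrality/domain restrictions \cref{eq:Amb:integer}--\cref{eq:Amb:end}. Since these are period-local constraints, their validity for each $\bar{x}^{v_t}$ transfers immediately to $x(t)$, with $\benefit_i(t)$ and $v_i(t)$ reconstructed exactly as in the pricing subproblem.

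For cross-period feasibility, the key observation is the edge definition in \cref{def:CCG}: an edge $(v,w)\in E$ holds precisely when $\|\bar{x}^v - \bar{x}^w\|_1 \le 2r$. Since $(v_t, v_{t+1})\in E$ for every $t \in \{1,\ldots, T-1\}$ by the definition of a walk, we obtain $\sum_{i=1}^n |x_i(t+1) - x_i(t)| = \|\bar{x}^{v_{t+1}} - \bar{x}^{v_t}\|_1 \le 2r$, which is exactly \cref{eq:Amb:trans}. Note that, unlike the Hamiltonian-path setting of \cref{thm:HamPath}, a walk is allowed to revisit vertices, which causes no issue: repeatedly using the same configuration in several periods trivially satisfies the transition bound (the $\ell_1$ distance is $0$) and remains feasible per period.

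The main (and essentially the only) conceptual obstacle is making sure that the walk formulation really is weaker than the Hamiltonian-path formulation of \cref{thm:HamPath} yet still gives \emph{sufficiency} for AWT feasibility --- i.e., checking that nothing in the AWT forces distinct periods to use distinct configurations. Since the AWT constraints \cref{eq:Amb:begin}--\cref{eq:Amb:end} are all local to a single period, and \cref{eq:Amb:trans} is symmetric and satisfied trivially when two consecutive configurations are identical, we lose no feasibility by allowing repetitions in the walk. With this remark, the three ingredients above combine to give the claim.
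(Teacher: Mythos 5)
Your proof is correct, but it takes a more direct route than the paper's. The paper does not re-verify the AWT constraints from scratch: it first converts the walk $v_1,\ldots,v_T$ into a feasible point $\tilde q$ of the MP by letting $\tilde q_j$ count the number of visits to vertex $j$ (so that $\sum_j \tilde q_j = T$, satisfying \cref{eq:BAPprimal_cstr2}), and then reduces to \cref{thm:HamPath} by exhibiting a Hamiltonian path in the configuration graph of $\tilde q$ --- namely the lift $(v_1, n(v_1)+1), (v_2, n(v_2)+1), \ldots, (v_T, n(v_T)+1)$ of the walk, where $n(v_t)$ counts the earlier occurrences of $v_t$. That multiplicity-indexed second coordinate is exactly how the paper disposes of the ``walks may repeat vertices'' issue that you handle by observing that the $\ell_1$ distance between identical consecutive configurations is zero. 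Your direct verification buys self-containedness and makes explicit something the paper leaves implicit, namely that every configuration indexed by $\indexSeta$ satisfies the period-local constraints \cref{eq:Amb:begin}--\cref{eq:Amb:end} because it entered the master problem as a column of the pricing problem \cref{eq:BAPpricing}; the paper's reduction buys brevity by reusing the already-established equivalence between transition-feasibility and Hamiltonian paths. Both arguments ultimately hinge on the same observation, that the CCG edge condition $\left\Vert \bar x^{v} - \bar x^{w}\right\Vert_1 \le 2r$ of \cref{def:CCG} is precisely the transition constraint \cref{eq:Amb:trans}, so the two proofs are interchangeable.
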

\begin{proof}
Let $G=(V,E)$ be the CCG given a solution $q^\star$. Let $W=v_1, v_2, \dots,v_T$ be a walk of length $T$ in $G$. Since we allow {revisiting vertices}, it is possible that {$v_j = v_{j'}$ for some $j\neq j'$}.

Now define $\tilde q$ component-wise where $\tilde q_j$ corresponds to the number of times the vertex $j$ is visited in the walk W. Since the walk has a length $T$, trivially $\sum_j \tilde q_j = T$, satisfying \cref{eq:BAPprimal_cstr2}. Then, $\tilde y, \tilde q$  can be defined so that we have a feasible {solution} to the MP. Hence, if we now show that the CG defined by the nonzero components of $\tilde q$ has a Hamiltonian path, then the corresponding $\tilde x$ will be feasible for the AWT due to \cref{thm:HamPath}.

Now, in the CG, construct the path $P = (v_1, n(v_1)+1), (v_2, n(v_2)+1), \ldots, (v_T, n(v_T)+1)$ where $n(v_j)$ records the number of times $v_j$ has appeared in the path $P$ earlier so far. We note that each term in the path $P$ is indeed a vertex of the CG (which has $T$ vertices) and that they are all visited exactly once, implying that $P$ is the required Hamiltonian path.\hfill $\blacksquare$
\end{proof}

The general mechanism involving the CP component is provided in \cref{alg:cp}, which is the entire algorithm we test in \cref{sec:numerical}.
The CP component checks whether the solution returned by the CMP can be made valid in some way, i.e., if there exist solutions using only the configurations in $\indexSeta $ (i.e., the configurations that appear in the optimal CMP solution, see Definition \ref{def:CCG}) with integer values, such that they do not violate the transition constraints. By providing these feasible solutions, it provides an upper bound to the AWT. As soon as a set of configurations is given to CP, a cut is added to the CMP, which eliminates all solutions to CMP which only consist of the configurations provided to CP. Namely, if $\indexSeta$ indices the configurations given to CP, we add a cut $\sum_{j\in\indexSeta} q_j \leq T-1$, indicating that at least one of the configurations must be outside the set indexed by $\indexSeta$.

Let $k'$ be the cardinality of $\indexSeta $. A CCG is associated with solution $q $ --- this CCG forms the basis for a deterministic finite automaton. This automaton $A$ is defined by
 a tuple $(Q, \Sigma, \delta, q_0, F)$ of states $Q$, alphabet $\Sigma$, transition function $\delta: Q \times \Sigma \rightarrow Q$, initial state $q_0 \in Q$, and final states $F \subseteq Q$, with $Q = \{0, \dots, k'\}$, $\Sigma = \{1, \dots, k'\}$, $\delta = \{(u,v) \rightarrow v : (u,v) \in E\} \cup \{(0, u) \rightarrow u : u \in F\}$, $q_0 = 0$, and $F = \{1, \dots, k'\}$. In other words, this automaton accepts any valid sequence of $k'$ configurations, with dummy configuration $q_0 = 0$ being the initial state.
  Let $LB$ be the lower bound given by the CMP, and $UB$ be an upper bound. 
 Finally, let $\Omega$ be the collection of all index sets $\indexSeta'$ for which the CP model has been previously solved. 
There are $T$ decision variables $z$ with domains 
$\{0, \dots, k'-1\}$, with $z_t$ indicating which CMP configuration from $\indexSeta $ is used at time point $t$. These variables are constrained by

\begin{subequations}
\begin{alignat}{3}
& \min \phi \label{eq:cp:obj}\\
&\phi = \texttt{max}(c)  - \texttt{min}(c) \label{eq:cp:phi} \\
& LB  \le \phi \le UB -1 \label{eq:cp:bounds} \\
& \texttt{cost\_regular}(z, A,\tau_{i\star},c_i) &&\qquad i = 1, \dots, n \label{eq:cp:regular} \\
& \texttt{at\_most}(T-1,z,\omega ) &&\qquad\forall \omega \subset \indexSeta :\exists \indexSeta' \in \Omega,\; \omega \subseteq \indexSeta' \label{eq:cp:cuts} \\
        &z_t \in \indexSeta &&\qquad t=1,\ldots,T \label{eq:cp:zdef} \\
        &c_i \in \mathbb{Z}_{\ge 0}&&\qquad i=1,\ldots,n \label{eq:cp:cdef}
\end{alignat}\label{eq:CP}
\end{subequations}

The objective~\cref{eq:cp:obj} is to minimize the unfairness, i.e., the difference between the zone which is covered the most, and that which is covered the least~\cref{eq:cp:phi}. 
Any feasible solution should be strictly better than the upper bound, and search by the CP solver can be interrupted as soon as a feasible solution is found whose objective value coincides with $LB$~\cref{eq:cp:bounds}. 
By \cref{thm:walk}, any sequence of configurations indexed by $\indexSeta $ and of length $T$ must correspond to a feasible solution of the AWT.
This requirement is enforced by the $\texttt{cost-regular}$~\citep{costregular} constraints~\cref{eq:cp:regular}: A \texttt{cost-regular} constraint holds for zone $i$ if $z$ forms a word recognized by automaton $A$ and if variable $c_i$ is equal to the coverage of zone $i$ over $T$: 
$c_i = \sum_{t=1}^T \tau_{i, z_t}$.
Note that the CP model does not require that all configurations indexed by $\indexSeta $ be used at least once: Since it checks all possible subsets of $\mathscr{A}$, the cut added to CMP 
does not remove any unexplored part of the search space. Finally, all subsets $\omega \subset \mathscr{A}$ previously explored, i.e.  being as well a subset of some index set $\indexSeta'$ in $\Omega$, are considered 
by the model using the \texttt{at\_most} constraint \cref{eq:cp:cuts}: At most $T-1$ variables in $z$ can take on values in $\omega$. The motivation for this is illustrated in \cref{ex:cp_cuts2}. 

\begin{Ex}\label{ex:cp_cuts2}
Assume that $\mathscr{A}=\{j', j'', j'''\}$. If \cref{eq:CP} had previously solved $\mathscr{A}'=\{j', j''\}$, this solution space would be explored again since the configurations 
in $\mathscr{A}$ are not constrained to be used at least once. 
Adding constraint $\texttt{at\_most}(T-1,z,\{j', j''\})$ in the current iteration avoids this.
\end{Ex}

The CP solver we use is OR-Tools, which currently does not implement the \texttt{cost-regular} constraint. We thus replaced \cref{eq:cp:phi,eq:cp:regular} with the equivalent but less efficient\footnote{In practice, the CP component of \cref{alg:cp} remains very fast, so this loss in efficiency is negligible.} reformulation
\begin{alignat}{2}
&\phi = \max\limits_{i=1}^{n} \sum\limits_{t=1}^T \tau_{i, z_t} - \min\limits_{i=1}^{n} \sum\limits_{t=1}^T \tau_{i, z_t} \nonumber \\
&\texttt{regular}(z, A). \nonumber
\end{alignat}

\subsection{The Final Algorithm}
The general working of the final algorithm is presented formally in \cref{alg:cp}.

In each iteration of the final algorithm, we solve a linear program and make a call to the constraint programming solver. The linear program is basically the continuous relaxation of the MP with any subsequent cutting planes (Step \ref{st:cuts} in \cref{alg:cp}). This is solved using column generation. The configurations which receive non-zero weights (indexed by $\indexSeta$) in the optimal LP solution are passed to the constraint programming solver to detect whether there exists a solution to the AWT that uses only configurations indexed by $\indexSeta$. Meanwhile, a cut is added to the linear program, that eliminates all solutions which use only the configurations indexed by $\indexSeta$. 

With this, the large set of possible configurations are managed by the linear programming solver, which is powerful and efficient for large problems, while the CP-based solver only solves instances with a handful of configurations at a time.

\begin{algorithm}[t]
\caption{The Final {Algorithm}}\label{alg:cp}
\begin{algorithmic}[1]
\Require The ambulance allocation problem with number of zones $n$, the bases $\mathcal B$, $\zeta_i$, for $i=1,\ldots,n$, $f$ and $a_{ih}$ for $i,h = 1,\ldots,n$,  $r\in\Z_{\ge 0}$ and $T \in \Z_+$. 
\Ensure $x(1),\ldots, x(T)$ that is optimal to the AWT.
\State $LB\gets -\infty$, $UB\gets +\infty$. $ \mathscr{C} \gets\emptyset$. $x^\star(1),\ldots,x^\star(T)=NULL$.
\While {$UB > LB$}
\State Solve CMP, i.e., the continuous relaxation of the MP after adding each constraint in $\mathscr{C}$. Let $q^\star$ be the optimal solution and $o^\star$ be the optimal objective value.
\State $LB \gets  o^\star $.
\State $\indexSeta \gets \{j:q^\star_j > 0\}$.
\State $\mathscr{C} \gets \mathscr{C}\cup \left\{ \sum\limits_{j\in\indexSeta}q_j \leq T-1. \right\}$. \label{st:cuts}
\State $(x^\dagger(1),\ldots,x^\dagger(T)), o^\dagger \gets \textsc{ConstraintProgramming}(q^\star, n, a, T, LB, UB, \mathscr{C}) $.
\If {$o^\dagger < UB$}
 \State $UB\gets o^\dagger$ and $(x^\star(1),\ldots,x^\star(T)) \gets (x^\dagger(1),\ldots,x^\dagger(T))$.
\EndIf
\EndWhile
\State \Return $x^\star(1),\ldots,x^\star(T)$
\State
\Function {ConstraintProgramming}{$q, n, a, T, LB, UB, \mathscr{C}$}
\State $G\gets$ CCG defined by $\mathscr{A}$.
\State Solve \cref{eq:CP} on the graph $G$ with appropriate values of $\benefit, c$.
\If {\cref{eq:CP} is infeasible}
\State \Return $NULL$, $+\infty$
\Else
\State \Return $(x^\dagger(1),\ldots,x^\dagger(T)), o^\dagger$.
\EndIf
\EndFunction
\end{algorithmic}
\end{algorithm}
\section{{Computational Experiments}}\label{sec:numerical}

In this section, we introduce the setting that we have used to evaluate computationally \cref{alg:cp} and we discuss the results of such an evaluation in the context of ambulance location and relocation. In particular, we use real data from the city of Utrecht, Netherlands, as well as synthetically-generated instances of varying sizes. Moreover, we consider a predetermined time horizon, i.e., a fixed $T$ of size 30, with the understanding that a decision maker could reasonably make plans on a monthly basis.

\paragraph{Utrecht instance.} We use the model defined in \cref{sec:alp} to determine ambulance allocation in the city of Utrecht, Netherlands, using a dataset provided by the RIVM.\footnote{National Institute for Public Health and the Environment of the Netherlands.} The Utrecht instance contains 217 zones, 18 of which are bases. Since calls should be reached within 15 minutes, we consider that a zone is connected to another if an ambulance can travel between the two zones in under 15 minutes. This makes the graph about 28.4\% dense.
\add{Moreover, the fleet of ambulance consists of 19 ambulance vehicles.}

The efficiency measure we impose is that at least 95\% of all the zones should be covered at all times. 
We consider that a zone is covered if sufficiently many ambulances are in the vicinity of that zone. In turn, we define the sufficient number of ambulances for a zone to be 1, 2, 3, or 4, based on the population density of the zone. 

\paragraph{Synthetic instances.} Reproducing the ratio of bases and edges to zones, we also generated synthetic instances\footnote{These instances can be found at https://github.com/PhilippeOlivier/ambulances.} of sizes 50, 100, 200, and 400 zones, using the Mat\'ern cluster point process \citep{matern}, which helps in generating a distribution of zones mimicking a realistic urban setting. The number of ambulances for the instances is chosen to be just enough to ensure feasibility. The results are averaged over five instances of each size.

\paragraph{Testing environment and software. } Testing was performed on an Intel 2.8 GHz processor with 8 GBs of RAM running Arch Linux, and the models were solved with Gurobi 9.0.1 and OR-Tools 8.0. A time limit of 1,200 seconds was imposed \add{to solve each instance}. 

\paragraph{Results.}
It is easy to see that there are two parameters that are likely to influence the difficulty of the solving process: the size of the instance (number of zones) and the transition constraints, i.e., the flexibility we allow for relocating ambulances between zones on a daily basis. We would typically expect the size of the instance to be a significant factor, but this is likely to be mitigated by the column generation approach, which tends to scale well. The effects that the transition constraints will have on the solving process, however, are less clear.
In order to assess such an effect, for each instance, we vary the number of ambulances that can shift bases on consecutive days ($r$ in constraints \cref{eq:Amb:trans}) from $0.1m$ to $m$ in increments of $0.1m$, where $m$ is the total number of ambulances in the instance. We call this ratio maximum transition, $MT$.
For each of these cases, we record the average of the time taken to solve these instances to optimality (capped at 1,200 CPU seconds). 
We record the final relative gap left for the instance, which is defined by $\frac{|UB_f-LB_f|}{UB_f}$, where $LB_f$ and $UB_f$ represent the values of the best lower and upper bounds at the time limit, respectively. We also record the initial relative gap for the instance, which is defined by $\frac{|UB_i-LB_i|}{UB_i}$, where $LB_i$ represents the value of the initial lower bound (without any cuts), and $UB_i$ represents the value of the initial, trivial upper bound.
We present the final relative gaps associated with different classes of instances in \cref{fig:results}. 
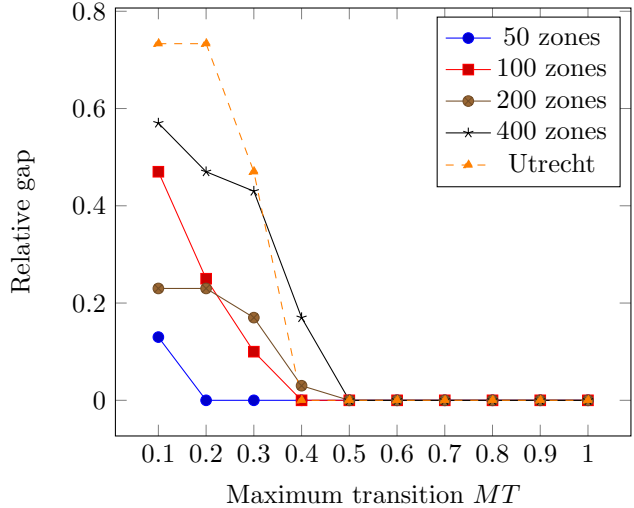
\begin{figure}[ht!]
    \centering
\begin{tikzpicture}
\begin{axis}[xtick={0.1,0.2,...,1},xlabel={Maximum transition $MT$},ylabel={Relative gap},]
\addplot table [x=mt, y=gap, col sep=comma] {50zones.csv};
\addplot table [x=mt, y=gap, col sep=comma] {100zones.csv};
\addplot table [x=mt, y=gap, col sep=comma] {200zones.csv};
\addplot table [x=mt, y=gap, col sep=comma] {400zones.csv};
\addplot[dashed, mark=triangle*, color=orange] table [x=mt, y=gap, col sep=comma] {utrecht.csv};
\legend{50 zones,100 zones,200 zones,400 zones,Utrecht}
\end{axis}
\end{tikzpicture}
    \caption{Final relative gaps with respect to the maximum transition $MT$. Gaps in the synthetic instances are averaged over 5 instances. Time limit of 1,200 CPU seconds.}
    \label{fig:results}
\end{figure}

One can immediately observe from \cref{fig:results} that the transition constraints are affecting the difficulty of the instances for $MT \leq 0.5$, while everything can be solved to optimality within the time limit for $MT \geq 0.5$ independently of the number of zones. The larger $MT$ the smaller the final gap, i.e., when there is more freedom in moving the ambulances around on a daily basis, \cref{alg:cp} becomes very effective in computing optimal solutions. 

We observe that for varying values of $MT$, the Utrecht instance has a higher relative gap than synthetic instances of comparable and even larger size. This discrepancy is the result of the distribution of the population densities across the zones. 
In the synthetic instances, the population densities are randomly distributed among the zones. 
The Utrecht instance, in contrast, exhibits several distinct clusters of varying sizes and population density distributions.\footnote{Constructing these sophisticated clusters is not a trivial task, which is why we settled on a random distribution of population densities for the synthetic instances. By randomizing the placement of the population for the Utrecht instance, its gap becomes similar to that of the synthetic instances.}

Unsurprisingly, \cref{fig:results} also suggests that larger instances are more difficult because, typically, the relative gap is large when time limits are hit. This is confirmed in more details from the results in \cref{tab:times}, where we report, for every group of instances and every $MT$ value, the initial gap (``i.gap"), the final gap at the time limit (``f.gap"),
the number of generated columns (``\#cols"), the number of solved instances (``\#solved"),\footnote{The entry ``\#solved" takes an integer value between 0 and 5 for synthetic instances and 0/1 for the Utrecht instance.} and the average time for the instances solved to optimality (``time"). 
\begin{table}[h!]
\tiny
\tabcolsep=2.5pt
    {\begin{tabular}{r|rrrrr|rrrrr|rrrrr}
       \multicolumn{1}{c|}{} & \multicolumn{5}{c|}{\textbf{50 zones}} & \multicolumn{5}{c|}{\textbf{100 zones}} & \multicolumn{5}{c}{\textbf{200 zones}} \\
       \hline
       $MT$ & i.gap & f.gap & \#cols & \#solved & time & i.gap & f.gap & \#cols & \#solved & time & i.gap & f.gap & \#cols & \#solved & time\\
       \hline
       0.1 & 0.13 & 0.13  &  738 & 4 & 0.3 &  0.55 & 0.47  & 1120 & 1 & 0.2 & 0.23 & 0.23  & 714 & 3 &  2.5 \\ 
       0.2 & 0.13 & 0.00  &  3   & 5 & 0.4 &  0.55 & 0.25  & 674  & 2 & 1.2 & 0.23 & 0.23  & 710 & 3 &  2.5 \\ 
       0.3 & 0.13 & 0.00  &  3   & 5 & 0.4 &  0.55 & 0.10  & 11   & 4 & 2.5 & 0.23 & 0.17  & 537 & 3 &  2.6 \\ 
       0.4 & 0.13 & 0.00  &  3   & 5 & 0.4 &  0.55 & 0.00  & 9    & 5 & 2.7 & 0.23 & 0.02  & 108 & 4 & 82.7 \\  
       0.5 & 0.13 & 0.00  &  3   & 5 & 0.4 &  0.55 & 0.00  & 9    & 5 & 2.3 & 0.23 & 0.00  & 19  & 5 &  7.0 \\  
       0.6 & 0.13 & 0.00  &  3   & 5 & 0.4 &  0.55 & 0.00  & 9    & 5 & 2.1 & 0.23 & 0.00  & 15  & 5 &  7.2 \\    
       0.7 & 0.13 & 0.00  &  3   & 5 & 0.4 &  0.55 & 0.00  & 9    & 5 & 2.1 & 0.23 & 0.00  & 15  & 5 &  6.7 \\    
       0.8 & 0.13 & 0.00  &  3   & 5 & 0.4 &  0.55 & 0.00  & 9    & 5 & 2.2 & 0.23 & 0.00  & 15  & 5 &  6.0 \\    
       0.9 & 0.13 & 0.00  &  3   & 5 & 0.4 &  0.55 & 0.00  & 9    & 5 & 2.2 & 0.23 & 0.00  & 15  & 5 &  6.0 \\    
       1   & 0.13 & 0.00  &  3   & 5 & 0.4 &  0.55 & 0.00  & 9    & 5 & 2.1 & 0.23 & 0.00  & 15  & 5 &  6.0 \\   
       \hline
     \end{tabular}}
    {\begin{tabular}{r|rrrrr|rrrrr}
       \multicolumn{11}{c}{} \\
       \multicolumn{1}{c|}{} & \multicolumn{5}{c|}{\textbf{400 zones}} & \multicolumn{5}{c}{\textbf{Utrecht}} \\
       \hline
       $MT$ & i.gap & f.gap & \#cols & \#solved & time & i.gap & f.gap & \#cols & \#solved & time\\
       \hline
       0.1  & 0.57 & 0.57 & 248 & 1 &  2.7  & 0.73 & 0.73 & 786  & 0 & - \\ 
       0.2  & 0.57 & 0.47 & 246 & 1 &  2.7  & 0.73 & 0.73 & 803  & 0 & - \\ 
       0.3  & 0.57 & 0.43 & 238 & 1 &  2.7  & 0.73 & 0.47 & 757  & 0 & - \\ 
       0.4  & 0.57 & 0.17 & 174 & 3 & 147.4 & 0.73 & 0.00 & 353  & 1 & 315.6 \\  
       0.5  & 0.57 & 0.00 & 93  & 5 & 273.6 & 0.73 & 0.00 & 252  & 1 & 122.3 \\  
       0.6  & 0.57 & 0.00 & 65  & 5 & 63.6  & 0.73 & 0.00 & 96   & 1 & 49.6 \\    
       0.7  & 0.57 & 0.00 & 60  & 5 & 49.6  & 0.73 & 0.00 & 96   & 1 & 50.3 \\    
       0.8  & 0.57 & 0.00 & 60  & 5 & 47.1  & 0.73 & 0.00 & 96   & 1 & 48.4 \\    
       0.9  & 0.57 & 0.00 & 60  & 5 & 47.4  & 0.73 & 0.00 & 96   & 1 & 49.8 \\    
       1    & 0.57 & 0.00 & 60  & 5 & 49.1  & 0.73 & 0.00 & 96   & 1 & 54.7 \\    
       \hline

     \end{tabular}}
\caption{Detailed results for \cref{alg:cp} on both synthetic and real-world Utrecht instances. Time limit of 1,200 CPU seconds. The results of the synthetic instances are averaged over 5 instances.} 
    \label{tab:times}
\end{table}

The results in \cref{tab:times} show that for $MT > 0.7$ constraints \cref{eq:Amb:trans} are not binding, i.e., they do not cut off the obtained optimal solution to the rest of the model, and \cref{alg:cp} behaves exactly like for $MT = 0.7$. 
The average time for the instances solved to optimality is often low, thus indicating that if we manage to prove optimality by completely closing the gap, then this is achieved quickly, generally with a few calls to \textsc{ConstraintProgramming}. In contrast, if we do not succeed in closing the gap early, then it is unlikely to be closed at all in the end. In the cases where optimality is not proven, improvements are still made during the solving process mostly because of improvements on the upper bound value as \textsc{ConstraintProgramming} tests more and more combinations of configurations. This suggests a few things. First, with a high enough $MT$, the initial lower bound is generally optimal, and feasible solutions whose objective values coincide with this bound can readily be found. Second, when the $MT$ value is low, solutions close to the initial LB are harder to find, and the gap is harder to close. Third, the computational power of \cref{alg:cp} is associated with the effectiveness in searching for primal solutions (upper bound improvement) versus the progress on the dual side (lower bound improvement), which appears to be very difficult.

\begin{table}[h!]
\tiny
\tabcolsep=2.5pt
    {\begin{tabular}{r|rrrr|rrrr|rrrr|rrrr|rrrr}
       \multicolumn{1}{c|}{} & \multicolumn{4}{c|}{\textbf{50 zones}} & \multicolumn{4}{c|}{\textbf{100 zones}} & \multicolumn{4}{c}{\textbf{200 zones}} & \multicolumn{4}{c}{\textbf{400 zones}} & \multicolumn{4}{c}{\textbf{Utrecht}} \\
       \hline
       $MT$ & $g$ & $h$ & $g-h$ & cov & $g$ & $h$ & $g-h$ & cov & $g$ & $h$ & $g-h$ & cov & $g$ & $h$ & $g-h$ & cov & $g$ & $h$ & $g-h$ & cov \\
       \hline
	   0.1 & 30 &     24 &      6 &   99.20\% & 30 &     10 &     20 &   96.20\% & 30 &      6 &     24 &   97.33\% & 30 &      6 &     24 &  98.45\% & - & - & - & - \\
	   0.2 & 30 &     28 &      2 &   99.47\% & 30 &     16 &     14 &   96.20\% & 30 &      6 &     24 &   97.33\% & 30 &     12 &     18 &  98.45\% & - & - & - & - \\
	   0.3 & 30 &     28 &      2 &   99.47\% & 30 &     20 &     10 &   96.53\% & 30 &      9 &     21 &   97.33\% & 30 &     16 &     14 &  98.45\% & 30 & 15 & 15 & 97.23\% \\
	   0.4 & 30 &     28 &      2 &   99.47\% & 30 &     23 &      7 &   96.43\% & 30 &     13 &     17 &   97.33\% & 30 &     22 &      8 &  98.19\% & 30 & 22 & 8 & 95.39\% \\
	   0.5 & 30 &     28 &      2 &   99.47\% & 30 &     23 &      7 &   96.43\% & 30 &     13 &     17 &   97.15\% & 30 &     23 &      7 &  97.23\% & 30 & 22 & 8 & 95.48\% \\
	   0.6 & 30 &     28 &      2 &   99.47\% & 30 &     23 &      7 &   96.43\% & 30 &     13 &     17 &   97.35\% & 30 &     23 &      7 &  97.21\% & 30 & 22 & 8 & 95.39\% \\
	   0.7 & 30 &     28 &      2 &   99.47\% & 30 &     23 &      7 &   96.43\% & 30 &     13 &     17 &   97.00\% & 30 &     23 &      7 &  97.23\% & 30 & 22 & 8 & 95.39\% \\
	   0.8 & 30 &     28 &      2 &   99.47\% & 30 &     23 &      7 &   96.43\% & 30 &     13 &     17 &   97.22\% & 30 &     23 &      7 &  97.13\% & 30 & 22 & 8 & 95.39\% \\
	   0.9 & 30 &     28 &      2 &   99.47\% & 30 &     23 &      7 &   96.43\% & 30 &     13 &     17 &   97.22\% & 30 &     23 &      7 &  97.23\% & 30 & 22 & 8 & 95.39\% \\
	   1   & 30 &     28 &      2 &   99.47\% & 30 &     23 &      7 &   96.43\% & 30 &     13 &     17 &   97.22\% & 30 &     23 &      7 &  97.17\% & 30 & 22 & 8 & 95.39\% \\
       \hline
     \end{tabular}}
	 \caption{Enforcing coverage constraints at 95\%. Solution values for both synthetic and real-world Utrecht instances: $g$, $h$, $g-h$ (defined in \cref{eq:BAPprimal}), and ``cov'' (the \remove{sum of all}\add{average} coverage, i.e., the efficiency). The results of the synthetic instances are averaged over 5 instances.} 
    \label{tab:gh95}
\end{table}

\begin{table}[h!]
\tiny
\tabcolsep=2.5pt
    {\begin{tabular}{r|rrrr|rrrr|rrrr|rrrr|rrrr}
       \multicolumn{1}{c|}{} & \multicolumn{4}{c|}{\textbf{50 zones}} & \multicolumn{4}{c|}{\textbf{100 zones}} & \multicolumn{4}{c}{\textbf{200 zones}} & \multicolumn{4}{c}{\textbf{400 zones}} & \multicolumn{4}{c}{\textbf{Utrecht}} \\
       \hline
       $MT$ & $g$ & $h$ & $g-h$ & cov & $g$ & $h$ & $g-h$ & cov & $g$ & $h$ & $g-h$ & cov & $g$ & $h$ & $g-h$ & cov & $g$ & $h$ & $g-h$ & cov \\
       \hline
	   0.1 & 30 &     24 &      6 &   98.80\% & 30 &     15 &     15 &   95.27\% & 30 &      6 &     24 &   94.40\% & 30 &      9 &     21 &  98.45\% & - & - & - & - \\
	   0.2 & 30 &     29 &      1 &   98.67\% & 30 &     23 &      7 &   95.10\% & 30 &      6 &     24 &   94.40\% & 30 &     13 &     17 &  98.45\% & - & - & - & - \\
	   0.3 & 30 &     29 &      1 &   98.67\% & 30 &     23 &      7 &   94.17\% & 30 &     12 &     18 &   94.40\% & 30 &     15 &     15 &  98.45\% & 30 & 19 & 11 & 90.32\% \\
	   0.4 & 30 &     29 &      1 &   98.67\% & 30 &     24 &      6 &   94.17\% & 30 &     20 &     10 &   94.30\% & 30 &     23 &      7 &  97.70\% & 30 & 22 & 8 & 90.32\% \\
	   0.5 & 30 &     29 &      1 &   98.67\% & 30 &     24 &      6 &   94.17\% & 30 &     21 &      9 &   93.33\% & 30 &     25 &      5 &  95.31\% & 30 & 23 & 7 & 91.90\% \\
	   0.6 & 30 &     29 &      1 &   98.67\% & 30 &     24 &      6 &   94.17\% & 30 &     21 &      9 &   93.65\% & 30 &     25 &      5 &  95.42\% & 30 & 23 & 7 & 91.84\% \\
	   0.7 & 30 &     29 &      1 &   98.67\% & 30 &     24 &      6 &   94.17\% & 30 &     21 &      9 &   93.30\% & 30 &     25 &      5 &  95.19\% & 30 & 23 & 7 & 91.53\% \\
	   0.8 & 30 &     29 &      1 &   98.67\% & 30 &     24 &      6 &   94.17\% & 30 &     21 &      9 &   93.30\% & 30 &     25 &      5 &  95.05\% & 30 & 23 & 7 & 91.97\% \\
	   0.9 & 30 &     29 &      1 &   98.67\% & 30 &     24 &      6 &   94.17\% & 30 &     21 &      9 &   93.28\% & 30 &     25 &      5 &  95.10\% & 30 & 23 & 7 & 92.00\% \\
	   1   & 30 &     29 &      1 &   98.67\% & 30 &     24 &      6 &   94.17\% & 30 &     21 &      9 &   93.30\% & 30 &     25 &      5 &  95.07\% & 30 & 23 & 7 & 92.00\% \\
       \hline
     \end{tabular}}
	 \caption{Enforcing coverage constraints at 90\%. Solution values for both synthetic and real-world Utrecht instances: $g$, $h$, $g-h$ (defined in \cref{eq:BAPprimal}), and ``cov'' (the \remove{sum of all}\add{average} coverage, i.e., the efficiency). The results of the synthetic instances are averaged over 5 instances.} 
    \label{tab:gh90}
\end{table}

\begin{table}[h!]
\tiny
\tabcolsep=2.5pt
    {\begin{tabular}{r|rrrr|rrrr|rrrr|rrrr|rrrr}
       \multicolumn{1}{c|}{} & \multicolumn{4}{c|}{\textbf{50 zones}} & \multicolumn{4}{c|}{\textbf{100 zones}} & \multicolumn{4}{c}{\textbf{200 zones}} & \multicolumn{4}{c}{\textbf{400 zones}} & \multicolumn{4}{c}{\textbf{Utrecht}} \\
       \hline
       $MT$ & $g$ & $h$ & $g-h$ & cov & $g$ & $h$ & $g-h$ & cov & $g$ & $h$ & $g-h$ & cov & $g$ & $h$ & $g-h$ & cov & $g$ & $h$ & $g-h$ & cov \\
       \hline
	   0.1 & 30 & 24 & 6 & 97.60\% & 30 & 10 & 20 & 93.60\% & 30 &  6 & 24 & 5616\% & 30 & 10 & 20 &  96.45\% & - & - & - & - \\
	   0.2 & 30 & 29 & 1 & 98.40\% & 30 & 24 &  6 & 92.57\% & 30 &  6 & 24 & 5616\% & 30 & 13 & 17 &  96.45\% & - & - & - & - \\
	   0.3 & 30 & 29 & 1 & 98.40\% & 30 & 25 &  5 & 92.90\% & 30 & 14 & 16 & 5616\% & 30 & 17 & 13 &  96.45\% & 30 & 22 & 8 & 94.47\% \\
	   0.4 & 30 & 29 & 1 & 98.40\% & 30 & 25 &  5 & 92.57\% & 30 & 20 & 10 & 5610\% & 30 & 22 &  8 &  96.45\% & 30 & 25 & 5 & 90.48\% \\
	   0.5 & 30 & 29 & 1 & 98.40\% & 30 & 25 &  5 & 92.57\% & 30 & 22 &  8 & 5477\% & 30 & 24 &  6 &  92.53\% & 30 & 25 & 5 & 90.48\% \\
	   0.6 & 30 & 29 & 1 & 98.40\% & 30 & 25 &  5 & 92.57\% & 30 & 22 &  8 & 5464\% & 30 & 25 &  5 &  92.52\% & 30 & 25 & 5 & 90.48\% \\
	   0.7 & 30 & 29 & 1 & 98.40\% & 30 & 25 &  5 & 92.57\% & 30 & 22 &  8 & 5448\% & 30 & 25 &  5 &  93.19\% & 30 & 25 & 5 & 90.48\% \\
	   0.8 & 30 & 29 & 1 & 98.40\% & 30 & 25 &  5 & 92.57\% & 30 & 22 &  8 & 5454\% & 30 & 25 &  5 &  93.14\% & 30 & 25 & 5 & 90.48\% \\
	   0.9 & 30 & 29 & 1 & 98.40\% & 30 & 25 &  5 & 92.57\% & 30 & 22 &  8 & 5453\% & 30 & 25 &  5 &  93.19\% & 30 & 25 & 5 & 90.48\% \\
	   1   & 30 & 29 & 1 & 98.40\% & 30 & 25 &  5 & 92.57\% & 30 & 22 &  8 & 5453\% & 30 & 25 &  5 &  93.14\% & 30 & 25 & 5 & 90.48\% \\
       \hline
     \end{tabular}}
	 \caption{Enforcing coverage constraints at 85\%. Solution values for both synthetic and real-world Utrecht instances: $g$, $h$, $g-h$ (defined in \cref{eq:BAPprimal}), and ``cov'' (the \remove{sum of all}\add{average} coverage, i.e., the efficiency). The results of the synthetic instances are averaged over 5 instances.} 
    \label{tab:gh85}
\end{table}

\cref{tab:gh95,tab:gh90,tab:gh85} show the actual values of the solutions, as well as the associated coverages (the sum of all the zones' coverages over the time horizon, i.e., the efficiency). In \cref{tab:gh95} the coverage constraints are enforced at 95\%, i.e., all the configurations must cover at least 95\% of the zones. In \cref{tab:gh90} those constraints are enforced at 90\%, and in \cref{tab:gh85} at 85\%. Comparing the three tables, we can see a clear correlation between fairness and efficiency in virtually all instances: Enforcing a higher coverage decreases the fairness of the solutions.

Finally, we are also interested in identifying the time spent in the column generation part as opposed to the \textsc{ConstraintProgramming} routine. To this end, we track the number of calls made to the function \textsc{ConstraintProgramming} and also measure the total time spent in calls to the function. 
The number of generated columns and the number of calls to \textsc{ConstraintProgramming} are detailed in \cref{tbl:results}. 

\setlength{\tabcolsep}{4.8pt}
\begin{table}[h!]
\centering
    {\begin{tabular}{crrrrrrrrrr}
       \hline
       \multicolumn{1}{c}{} & \multicolumn{2}{c}{\textbf{50 zones}} & \multicolumn{2}{c}{\textbf{100 zones}} & \multicolumn{2}{c}{\textbf{200 zones}} & \multicolumn{2}{c}{\textbf{400 zones}} & \multicolumn{2}{c}{\textbf{Utrecht}} \\
       \hline
       $MT$ & \#cols & \#calls & \#cols & \#calls & \#cols & \#calls & \#cols & \#calls & \#cols & \#calls \\
       \hline
       0.1 & 738 & 736 & 1120 & 1108 & 714 & 608 & 248 & 72& 786  & 210  \\ 
       0.2 & 3   &   1 & 674 & 662   & 710 & 605 & 246 & 72& 803  & 217  \\ 
       0.3 & 3   &   1 & 11  & 3     & 537 & 435 & 238 & 66& 757  & 202  \\ 
       0.4 & 3   &   1 & 9   & 1     & 108 & 58  & 174 & 22& 353  & 54  \\  
       0.5 & 3   &   1 & 9   & 1     & 19  & 3   & 93  & 6 & 252  & 21  \\  
       0.6 & 3   &   1 & 9   & 1     & 15  & 1   & 65  & 2 & 96  & 1  \\    
       0.7 & 3   &   1 & 9   & 1     & 15  & 1   & 60  & 1 & 96  & 1  \\    
       \hline
     \end{tabular}}
\caption{Number of columns and calls to \textsc{ConstraintProgramming} associated with the maximum transition (MT) constraints of the various instance sizes. 
}\label{tbl:results} \end{table}

The results in \cref{tbl:results} show that when enough freedom in the day-to-day movement of ambulances is allowed, optimality can generally be proven immediately, with just one call to the \textsc{ConstraintProgramming} routine. We also notice that the ratio of columns to the number of calls to the routine increases with the instance size. Such a ratio for the Utrecht instance is disproportionately high, due again to the distribution of the population densities. 

Finally, \cref{tbl:cg_cp} shows the average amount of time spent generating columns (``CG"), that spent enforcing the transition constraints through \textsc{ConstraintProgramming} (``CP") and the ratio between these two CPU times (``ratio") for the most difficult case, i.e., $MT=0.1$.
\begin{table}[h!]
\centering
    {\begin{tabular}{r|rr|r}
       \hline
                & \multicolumn{2}{c|}{time} & \\
       Instance & CG & CP & ratio \\
       \hline
50 zones  & 86  & 154 & 0.56 \\
100 zones & 411 & 550 & 0.75 \\
200 zones & 331 & 151 & 2.19 \\
400 zones & 711 & 253 & 2.81 \\
Utrecht  & 1071 & 129 & 7.96 \\
       \hline
     \end{tabular}}
\caption{CPU times in seconds spent in column generation and \textsc{ConstraintProgramming}, with $MT=0.1$.}
    \label{tbl:cg_cp}
 \end{table}

\section{Conclusion}
\label{sec:conclusion}

We introduced an abstract framework for solving a sequence of fair allocation problems, such that fairness is achieved over time. For some relevant special cases, we have been able to give theoretical proofs for the time horizon required for perfect fairness. We described a general integer programming formulation for this problem, as well as a formulation based on column generation and constraint programming. This latter formulation can be used in a practical context, as shown by the ambulance location problem applied to the city of Utrecht. The largest synthetic instances suggest that this formulation would scale well to regions twice the size of Utrecht, given that the freedom of movement of the ambulances is not overly restricted.

\section*{Acknowledgements}

The authors would like to thank the National Institute for Public Health and the Environment of the Netherlands (RIVM) for access to the data of their ambulance service. We are grateful to the anonymous referees for useful comments and remarks.

\bibliographystyle{plainnat}
\bibliography{references}

\section*{Appendix}

\appendix

\section{Dual of the CMP}

\begin{subequations}
\begin{alignat}{30}
\max \quad& T\mu &&\\
\textrm{s.t.} \quad 
&\sum_{i=1}^n \alpha_i = 1 &&\quad (g)&&\\
&\sum_{i=1}^n \beta_i = 1 &&\quad(h)&&\\
&\lambda_i - \alpha_i + \beta_i = 0&&\quad(y_i) &&\quad \forall \,i = 1, \dots, n\\
&\mu \leq \frac{1}{T}\sum_{i=1}^n \benefit_{ij}\lambda_i&&\quad (q_j) &&\quad \forall \,j = 1, \dots, k \label{eq:BAPdual:many}\\
&\alpha_i,\,\beta_i \ge 0 && &&\quad \forall \,i = 1, \dots, n.
\end{alignat}
\label{eq:BAPdual}
\end{subequations}

\end{document}